\newtheorem{theorem}{Theorem}[section]
\newtheorem{corollary}[theorem]{Corollary}
\newtheorem{lemma}[theorem]{Lemma}
\newtheorem{proposition}[theorem]{Proposition}
\newtheorem{definition}[theorem]{Definition}
\newtheorem{remark}[theorem]{Remark}
\newcommand{\sectionnotoc}[1]{%
  \begingroup
  \let\@tocwrite\@gobbletwo
  \section*{#1}%
  \endgroup}
\def\acts{\mathrel{\reflectbox{$\righttoleftarrow$}}}
\newcommand{\R}{\mathbb{R}}
\newcommand{\PP}{\mathbb{P}}\newcommand{\Z}{\mathbb{Z}}\newcommand{\N}{\mathbb{N}}
\title{A Resolution of the Diagonal for Smooth Projective Toric Varieties}
\author{Reginald Anderson}
\address{University of California, Irvine}
\email{reginala@uci.edu, reginald.anderson.cc@gmail.com}
\date{\today}
     \keywords{Resolution of diagonal, derived categories of coherent sheaves, toric varieties}
     \subjclass{14F08, 18G80}
\begin{document}
\justifying
\begin{abstract}
The cellular construction of Bayer--Popescu--Sturmfels extends Beilinson's diagonal resolution from projective space to projective toric varieties whose lattice of principal divisors is unimodular. We investigate the smooth projective case in which this lattice condition fails. The periodic arrangement then contains vertices outside the lattice, and the associated finite cellular complex can acquire homology in degree~$0$ and in higher degrees.

We use a floor-function labeling to assign Laurent monomials to vertices in a way compatible with the periodic hyperplane arrangement. A counterexample shows that, without further hypotheses, the undeformed complex need not resolve the diagonal. We therefore impose a symmetry hypothesis on the fan (central symmetry across the origin) under which the construction yields a locally free resolution of $\mathcal{O}_\Delta$ on $X_\Sigma\times X_\Sigma$. We also discuss how a deformation parameter $\epsilon$ should lead to a broader family of resolutions.
\end{abstract}

\maketitle

\tableofcontents

\section{Introduction}
The derived category $\mathrm{D}^b_{\mathrm{Coh}}(X_\Sigma)$ forms part of the B-side of homological mirror symmetry. Beilinson's resolution of the diagonal $L^\bullet$ \cite{Beilinson1978} was crucial in understanding $\mathrm{D}^b_{\mathrm{Coh}}(\PP^n)$, since it yields full strong exceptional collections of line bundles \cite{huybrechts2006fourier}. Via the associated Fourier--Mukai transform with kernel $L^\bullet$, one obtains concrete descriptions of objects of $\mathrm{D}^b_{\mathrm{Coh}}(\PP^n)$ \cite{BondalRepnAssocAlg}.

Projective space is the basic smooth projective toric variety, so one may ask how far Beilinson's construction extends within the toric category. Bayer--Popescu--Sturmfels answered this question for projective toric varieties satisfying their lattice-theoretic unimodularity condition \cite{bayer-popescu-sturmfels}. This requirement is strictly stronger than smoothness: the twice-iterated toric blow-up studied below is smooth even though its principal-divisor matrix is not unimodular. Here, by a nontrivially weighted projective space we mean a weighted projective space that is not isomorphic, as a variety, to ordinary projective space. Such a weighted projective variety is singular, hence never unimodular in this sense.

In this paper we study resolutions of the diagonal beyond the unimodular case, using a periodic hyperplane arrangement and a floor-function vertex labeling. Our main theorem gives a resolution under a central symmetry hypothesis on the fan. Without symmetry we still obtain the expected cokernel after saturation (Proposition~\ref{prop: main}), but additional hypotheses are required to control the remaining homology, as the counterexample below illustrates.

The construction begins with the periodic coordinate hyperplane arrangement $\mathcal{H}\subset\R^n$. We restrict it to the real span $\R L$ of the lattice $L$, translate the slice by a perturbation parameter $\epsilon$, and divide by the additive $L$-action to obtain the finite cell complex $\mathcal{H}_L^\epsilon/L$. We write $(\mathcal{F}^\bullet_{\mathcal{H}_L^\epsilon/L},\partial^\epsilon)$ for the associated cellular complex, whose terms and differential are defined in Section~\ref{sec: deformedResolution}; for $\epsilon=0$ we use the notation $(\mathcal{F}^\bullet_{\mathcal{H}_L/L},\partial)$. The principal result is the following.

We say that a complete fan $\Sigma\subset N_\R$ is \textbf{centrally symmetric} if $\Sigma(1)$ is invariant under $u\mapsto -u$ and its maximal cones come in opposite pairs.

\begin{theorem}\label{thm: MainTheorem1}
Let $X_\Sigma$ be a smooth projective toric variety whose fan $\Sigma$ is centrally symmetric. Then the complex $(\mathcal{F}^\bullet_{\mathcal{H}_L/L},\partial)$ obtained from the periodic arrangement $\mathcal{H}_L$ and the floor-function vertex labeling sheafifies to a locally free resolution of $\mathcal{O}_\Delta$ on $X_\Sigma\times X_\Sigma$.
\end{theorem}

Without additional hypotheses, the undeformed construction can fail. A counterexample can be constructed from a smooth complete toric surface with rays

\[
(1,0),(0,1),(-1,0),(-2,-1),(-1,-1),(-1,-2),(0,-1)
\]

for which the degree-zero homology of the cellular complex has an associated prime $\langle y_0,y_1\rangle$, where $y_i$ denotes the $y$-variable corresponding to the $i$-th ray in the displayed ordering; in particular, $y_0$ and $y_1$ correspond to the rays $(1,0)$ and $(0,1)$, respectively. Hence the complex does not resolve the diagonal. This motivates imposing additional hypotheses, such as central symmetry, or working with a deformation parameter $\epsilon$ admitting a circuit-generic lift.

The methods of \cite{bayer-popescu-sturmfels} run into trouble with extra vertices which appear in the smooth, non-unimodular setting. We use the floor function to give a new way to label vertices in a convex manner, and under the central symmetry hypothesis we show that this yields the desired resolution.

Several recent papers construct resolutions of the diagonal for toric varieties from viewpoints different from the one used here: Hanlon--Hicks--Lazarev draw on techniques related to wrapped Fukaya categories \cite{hanlon2024resolutionstoricsubvarietiesline}, Brown--Erman give a semigroup-based treatment \cite{Brown_2024}, and Favero--Huang develop homotopy path algebras associated to a stratification of the real torus \cite{favero2022homotopypathalgebras}. The related toric Deligne--Mumford stack paper \cite{anderson2023resolutiondiagonaltoricdelignemumford} treats global quotients $[X_\Sigma/\mu]$ by finite abelian groups and constructs the diagonal object locally using Morita theory, whereas the present paper studies smooth projective toric varieties and analyzes the extra vertices of $\mathcal{H}_L$ by means of floor-function labels and the deformation parameter $\epsilon$. Unlike a construction determined only by the one-skeleton, the cellular labels used here also record how the rays assemble into maximal cones. This additional fan data enters both the virtual resolution statement in Proposition~\ref{prop: main} and the locally free resolution obtained under the symmetry hypothesis in Theorem~\ref{thm: MainTheorem1}.


\section*{Outline}
In Section~2, we recall background for coherent sheaves on a smooth projective toric variety and the construction of Bayer--Popescu--Sturmfels in the unimodular case. In Section~3, we introduce the floor-function labeling and the deformed complexes $(\mathcal{H}_L^\epsilon/L,\partial^\epsilon)$, and we prove that the induced cokernel agrees with the diagonal after saturation by the irrelevant ideal (Proposition~\ref{prop: main}). In Section~4 we give explicit examples. In Section~5 we prove Theorem~\ref{thm: MainTheorem1} under the central symmetry hypothesis by controlling the remaining homology.

\section{Preliminaries}
\label{sec:3.3}

We use Cox's dictionary between coherent sheaves on a toric variety and graded modules over its homogeneous coordinate ring. For a simplicial toric variety $X_\Sigma$ with Cox ring $S$, every coherent sheaf is represented by the sheafification of a finitely generated graded $S$-module, and when $X_\Sigma$ is smooth a finitely generated graded $S$-module $F$ sheafifies to zero exactly when it is killed by a power of the irrelevant ideal $I_{irr}$ \cite[Proposition 3.3 - Theorem 3.11]{coxhgscoordinatering}. We will use this vanishing criterion throughout: after constructing a complex of graded $S$-modules, it is enough for the sheafified complex to resolve $\mathcal{O}_\Delta$ that the remaining homology modules are $I_{irr}$-torsion.

The constructions below produce a cellular complex $(\mathcal{F}^\bullet_{\mathcal{H}_L^\epsilon/L},\partial^\epsilon)$ which always has the expected cokernel after saturating by $I_{irr}$ (Proposition~\ref{prop: main}). Under the central symmetry hypothesis, we show that the remaining homology is also $I_{irr}$-torsion, so the sheafification of the complex yields a locally free resolution of $\mathcal{O}_\Delta$.

The construction used later is built from the cellular resolutions of Lawrence ideals developed by Bayer--Sturmfels and Bayer--Popescu--Sturmfels \cite{bayer-sturmfels,bayer-popescu-sturmfels}. We first fix the lattice notation. If $M$ is the character lattice of the dense torus of $X_\Sigma$, the divisor-class sequence is \cite[Theorem 4.1.3]{C-L-S}
\begin{equation}\label{eq: fundexactseq}
0 \rightarrow M \stackrel{B}{\rightarrow} \Z^{|\Sigma(1)|} \stackrel{\pi}{\rightarrow} \mathrm{Cl}(X_\Sigma) \rightarrow 0.     
\end{equation}  
We write $L=\operatorname{Im}(B)=\ker(\pi)$ for the lattice of principal divisors.
\begin{definition} Following \cite{bayer-popescu-sturmfels}, suppose that $B$ has full column rank. We call $L=\operatorname{Im}(B)$ \textbf{unimodular} when all maximal minors of $B$ are contained in $\{0,+1,-1\}$, and by convention use the same adjective for $X_\Sigma$. \end{definition} 
Here ``unimodular'' always refers to this condition on the matrix $B$, rather than merely to smoothness of the fan. When the condition holds, the Bayer--Popescu--Sturmfels complex resolves the diagonal ideal; after sheafification, its terms become sums of line bundles on $X_\Sigma\times X_\Sigma$. The purpose of the later sections is to alter that construction when $X_\Sigma$ is smooth but $B$ is not unimodular.

Choose an identification of the cocharacter lattice with $N\cong\Z^m$, and write the primitive ray generators as $\textbf{b}_1,\ldots,\textbf{b}_n$. With $B$ the $n\times m$ matrix whose rows are the $\textbf{b}_i$, and with $D_i$ the invariant divisor belonging to the $i$-th ray, the sequence above becomes
\begin{align} 0 \rightarrow \Z^m \stackrel{B}{\rightarrow} \Z^n \stackrel{\pi}{\rightarrow} \mathrm{Cl}(X) \rightarrow 0, \end{align}\label{eq:toric.fund.seq.unimod} 
where $\pi(e_i)=[D_i]$. 

\begin{definition}\label{def: jl} For a sublattice $L\subseteq\Z^n$, its \textbf{Lawrence ideal} is

\[ J_L = \left<\textbf{x}^\textbf{a}\textbf{y}^\textbf{b} - \textbf{x}^\textbf{b}\textbf{y}^\textbf{a} \text{ }|\text{ }\textbf{a}-\textbf{b} \in L \right> \subset S = \Bbbk[x_1, \dots, x_n, y_1, \dots, y_n ]. \]

As usual, $\textbf{x}^\textbf{a}=x_1^{a_1}\cdots x_n^{a_n}$ for $\textbf{a}=(a_1,\ldots,a_n)\in\N^n$. \end{definition}

Let
\[R=\Bbbk[x_\rho\mid\rho\in\Sigma(1)]\cong\Bbbk[x_1,\ldots,x_n]\]
be the Cox ring of $X_\Sigma$, graded by $\mathrm{Cl}(X_\Sigma)$ through \eqref{eq:toric.fund.seq.unimod}. For the product we use
\[ S=R\otimes_{\Bbbk}R=\Bbbk[x_1,\ldots,x_n,y_1,\ldots,y_n], \]
with its $\mathrm{Cl}(X)\times\mathrm{Cl}(X)$ grading. Define
\[
\psi:S\longrightarrow\Bbbk[\mathrm{Cl}(X)]\otimes R,
\qquad
\textbf{x}^{\textbf{u}}\textbf{y}^{\textbf{v}}
\longmapsto
[\textbf{u}]\otimes\textbf{x}^{\textbf{u}+\textbf{v}},
\]
and set $I_X=\ker(\psi)$. The following proposition identifies this homogeneous ideal with the ideal of the diagonal.
\begin{proposition} The diagonal embedding $X_\Sigma\hookrightarrow X_\Sigma\times X_\Sigma$ is represented in the Cox ring $S$ by

\[I_X = \left< \textbf{x}^\textbf{u}\textbf{y}^{\textbf{v}} - \textbf{x}^\textbf{v} \textbf{y}^\textbf{u} \text{ }|\text{ } \pi(\textbf{u}) = \pi(\textbf{v}) \text{ in } \mathrm{Cl}(X) \right> \subset S.\]  \end{proposition}

\begin{proof}
Recall that for $V = Spec(A)$ an affine variety, the diagonal mapping $\Delta: V \rightarrow V \times V$ corresponds to the $\Bbbk$-algebra homomorphism $A \otimes A \stackrel{\phi}{\rightarrow} A$ given by $\sum a_1 \otimes a_2 \mapsto \sum a_1 a_2$, from the universal property of $V \times V$. Next, locally, for $\Sigma \subset N_\R$, the diagonal map $U_i \stackrel{\Delta}{\hookrightarrow} U_i \times U_i$ corresponds to
\[ \Bbbk[\sigma_i^\vee \cap M] \otimes_\Bbbk \Bbbk [\sigma_i^\vee \cap M] \stackrel{\phi}{\rightarrow} \Bbbk [\sigma_i^\vee \cap M ] \] given by $\sum a_1 \otimes a_2 \mapsto \sum a_1 a_2.$ A choice of $\sigma\in \Sigma$ gives the monomial \[ x^{\hat{\sigma} } = \prod_{\rho\not\in\sigma(1) } x_\rho \in R \] for $\sigma$ a maximal cone of $\Sigma$, and $\sigma(1)$ the 1-cones of $\sigma$ \cite{C-L-S}. The map 
\[ \chi^m \mapsto x^{<m>} = \prod_\rho x_\rho^{<m, u_\rho> } \] for $m\in M$ and $u_\rho$ a primitive ray generator of the ray $\rho \in \Sigma(1)$
induces the isomorphism

\begin{align*}
    \pi_\sigma^*: \Bbbk[\sigma^\vee \cap M] &\stackrel{\cong}{\rightarrow} ( R_{x^{\hat{\sigma}}})^G \text{ for }G \text{ given by }\mathrm{Cl}(X) \cong Hom(G, \Bbbk^*) \\
        &\cong (R_{x^{\hat{\sigma}} } )_0. \end{align*}

Here, $(R_{x^{\hat{\sigma}} } )_0$ are elements of degree $0$ in the $\mathrm{Cl}(X)$-grading on $R$. So locally, we have that $(I_\Delta)_{(x^{\hat{\sigma}} \otimes_k x^{\hat{\sigma}})}$ is the kernel of the map
\begin{align*}
(R_{x^{\hat{\sigma}}})_0 \otimes_k ( R_{x^{\hat{\sigma}} })_0 & \stackrel{\phi}{\rightarrow} (R_{x^{\hat{\sigma}}})_0 \text{ by }\\
\sum \textbf{x}^\textbf{u} \otimes_k \textbf{x}^\textbf{v} &\mapsto \sum \textbf{x}^{\textbf{u}+\textbf{v}} \end{align*} for $I_\Delta$ the $\mathrm{Cl}(X \times X)$-homogeneous ideal of $S$ corresponding to the diagonal, and $\textbf{x}$ local coordinates in $U_\sigma$. It is evident that 
\begin{align*}
I_X &= \left< \textbf{x}^\textbf{u} \otimes \textbf{x}^\textbf{v} - \textbf{x}^\textbf{v} \otimes \textbf{x}^\textbf{u} \text{ }|\text{ } \pi(\textbf{u})=\pi(\textbf{v}) \in \mathrm{Cl}(X) \right> \end{align*} from which the proposition follows from the observation that $$(I_X)_{(x^{\hat{\sigma}} \otimes_k x^{\hat{\sigma}})}=(I_\Delta)_{(x^{\hat{\sigma}} \otimes_k x^{\hat{\sigma}})}$$ for $X_\Sigma$ smooth. Smoothness is used here to identify the Cox localizations $(R_{x^{\hat{\sigma}}})_0$ with the coordinate rings of the affine toric charts $U_\sigma$ in the usual geometric quotient presentation and to conclude from equality on the affine cover $\{U_\sigma\times U_\sigma\}$ that the corresponding homogeneous ideals define the same closed subscheme after saturation.  
 \end{proof} 

\begin{corollary} Let $L=\operatorname{Im}(B)=\ker(\pi)$ be the lattice of principal divisors in~\eqref{eq:toric.fund.seq.unimod}. Under the identification $S=\Bbbk[x_1,\ldots,x_n,y_1,\ldots,y_n]$, the diagonal ideal $I_X$ is the Lawrence ideal $J_L$ associated to $L$.  \end{corollary} 

In the notation introduced below, the same ideal is $I_{\Lambda(L)}$, where
\[
\Lambda(L)=\{(u,-u)\mid u\in L\}\subset\Z^{2n}
\]
is the Lawrence lifting. Since $\Lambda(L)\cong L$ as a lattice, unimodularity passes from $L$ to $\Lambda(L)$. We now record the module-theoretic conventions used in the cellular construction.

\begin{definition} Set $R=\Bbbk[\N^n]=\Bbbk[x_1,\ldots,x_n]$ and
\[
T=\Bbbk[\Z^n]=\Bbbk[x_1^{\pm1},\ldots,x_n^{\pm1}].
\]
The inclusion $\N^n\hookrightarrow\Z^n$ makes the Laurent polynomial ring $T$ an $R$-module. \end{definition}

\begin{definition}\cite[p. 1]{bayer-sturmfels} An $R$-submodule $M\subseteq T$ is a \textbf{monomial module} if it is generated by Laurent monomials $\textbf{x}^\textbf{a}$ with $\textbf{a}\in\Z^n$. \end{definition}

\begin{definition} Suppose that the unique minimal set of Laurent-monomial generators of a monomial module $M$ is closed under multiplication and inversion. We then call $M$ a \textbf{lattice module}. In this situation there is a lattice $L\subset\Z^n$ with $L\cap\N^n=\{\textbf{0}\}$ such that
\[ M_L = R\{\textbf{x}^\textbf{a} \text{ }|\text{ }\textbf{a} \in L \} = \Bbbk\{\textbf{x}^\textbf{b} \text{ }|\text{ } \textbf{b} \in \N^n + L \} \subset T. \]
\end{definition}

\begin{definition}\cite[p. 1]{miller-sturmfels} \label{defn: ILDefn} For a lattice $L\subseteq\Z^n$, its $\Z^n/L$-graded \textbf{lattice ideal} is
\[ I_L = \left< \textbf{x}^\textbf{a} - \textbf{x}^\textbf{b} \text{ }|\text{ } \textbf{a}-\textbf{b} \in L \right> \subset R. \]
\end{definition}  


\begin{definition}\cite[Section 3]{bayer-sturmfels} Let $S[L]$ denote the $S$-subalgebra of $S[z_1^{\pm1},\ldots,z_n^{\pm1}]$ spanned by the monomials
$\textbf{x}^{\textbf{a}_1}\textbf{y}^{\textbf{a}_2}\textbf{z}^{\textbf{b}}$ with $(\textbf{a}_1,\textbf{a}_2)\in\N^{2n}$ and $\textbf{b}\in L$. Equivalently, $S[L]$ is the group algebra of $L$ over $S$. 
\end{definition}

\begin{definition} The \textbf{periodic coordinate arrangement} in $\R^n$ is
\[
\mathcal{H}=\bigcup_{i=1}^n\bigcup_{j\in\Z}\{p\in\R^n\mid p_i=j\}.
\]
Thus $\mathcal{H}$ consists of all integral translates of the coordinate hyperplanes. \end{definition}

\begin{definition} Put $\R L=L\otimes_\Z\R$. The arrangement induced on this real span is
\[
\mathcal{H}_L=\R L\cap\mathcal{H}\subseteq\R^n.
\]
\end{definition}

The hull construction need not be minimal, but in the unimodular Lawrence setting Bayer--Popescu--Sturmfels identify a minimal cellular resolution supported on the quotient of $\mathcal{H}_L$ by $L$ \cite{bayer-popescu-sturmfels}. We use the following form of their result.

\begin{theorem} \cite[Theorem 3.5]{bayer-popescu-sturmfels} If $L$ is unimodular, then $\mathcal{H}_L/L$ supports the minimal $S$-free resolution of $J_L$. \end{theorem}

The combinatorial role of unimodularity is that the vertices of $\mathcal{H}_L$ are exactly the points of $L$; a non-unimodular lattice produces additional vertices. In the quotient, a lattice vertex is labeled by the corresponding Laurent monomial in $M_{\Lambda(L)}$, and the labels on higher-dimensional cells determine the cellular differential.

\begin{definition} \label{def: monlabel}
  The monomial label at the vertex $(p_1,\dots, p_n)$ is $$\frac{x^p}{y^p} = \frac{x_1^{p_1} \cdots x_n^{p_n}}{y_1^{p_1} \cdots y_n^{p_n}}.$$ \end{definition}

Each face $F$ is labeled by the Laurent lowest-common-multiple (given by the coordinate-wise maximum) of the monomial labels on the vertices of $F$; we denote this label by $m_F$. For each $i\geq 0$, let $(\mathcal{H}_L/L)_i$ denote the set of oriented $i$-dimensional faces, and let $e_F$ denote the basis element associated to $F\in(\mathcal{H}_L/L)_i$. The $i$-th term of the $(S-\text{Mod})^{\mathrm{Cl}(X\times X)}$ free resolution of $\faktor{S}{J_L}$ is

\[
\mathcal{F}_i\left(\mathcal{H}_L/L\right)
=
\bigoplus_{F\in(\mathcal{H}_L/L)_i}
S(-|m_F|)\,e_F,
\]
where $|m_F|$ denotes the multidegree of $m_F$. For $F\in(\mathcal{H}_L/L)_i$, the differential is

\[
\partial_i(e_F)
=
\sum_{F'\lessdot F}
\operatorname{inc}(F',F)\,
\frac{m_F}{m_{F'}}\,e_{F'},
\]
where $F'\lessdot F$ means that $F'$ is a codimension-one face of $F$, and $\operatorname{inc}(F',F)\in\{\pm1\}$ is the incidence sign determined by the chosen orientations.

Bayer--Popescu--Sturmfels work out the case of $\PP^2$ in \cite[Section 2]{bayer-popescu-sturmfels}. We instead record the cellular resolution of $J_L$ for the smooth unimodular surface $Bl_p\PP^2$. 

\subsection{Example of cellular resolution for $\mathrm{Bl}_p \PP^2$} For $X_\Sigma = \mathrm{Bl}_p \PP^2$ with fan $\Sigma \subset N_\R$ with $\Sigma(1) = \{(1,0), (0,1), (-1,1),(0,-1) \}$, we have the fundamental exact sequence from \eqref{eq:toric.fund.seq.unimod} is 

\[ 0 \rightarrow \Z^2 \stackrel{B}{\rightarrow} \Z^4 \stackrel{\pi}{\rightarrow} \mathrm{Cl}(X_\Sigma) \rightarrow 0 \] with $B = \left[\begin{matrix} 1 & 0 \\ 0 & 1 \\ -1 & 1 \\ 0 & -1 \end{matrix}\right]$ and \begin{align*} \mathrm{Cl}(X_\Sigma) &= \faktor{\left<D_1, \dots, D_4\right>}{\left< D_1 \sim D_3, D_4 \sim D_2 + D_3 \right>}\\ &\cong \left<D_2, D_3\right>, \end{align*} so we choose the basis $\{D_2, D_3\}$ for $\mathrm{Cl}(X_\Sigma)$. We have $\R L \cap \mathcal{H}_L$ given in Figure~\hyperref[fig: RLHL]{1}.

\FloatBarrier
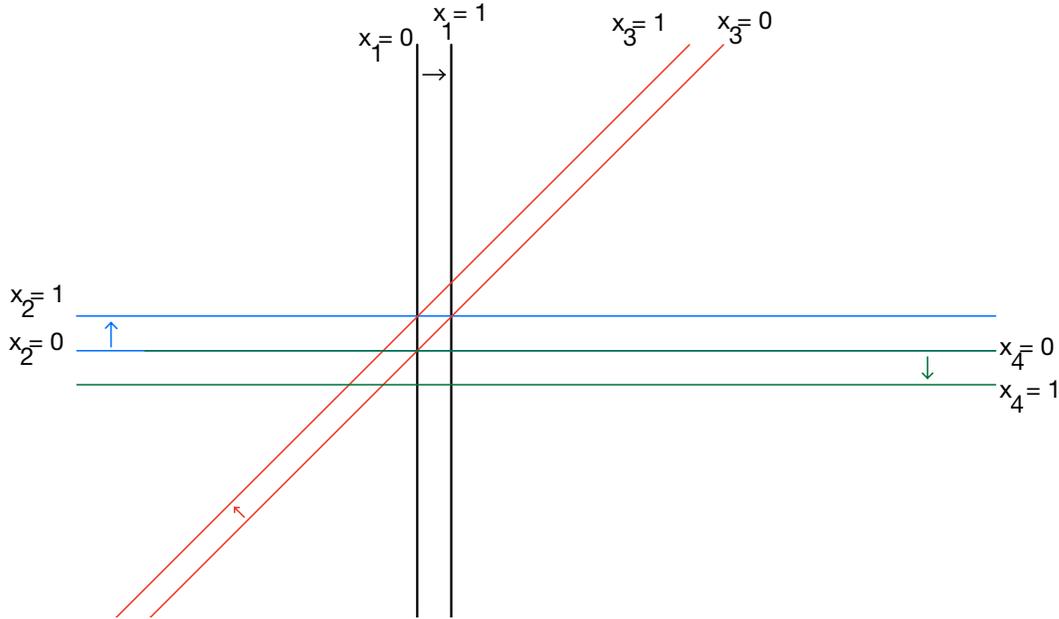
\begin{figure}[h] \label{fig: RLHL}
\begin{tikzpicture}[
  x=0.98cm,
  y=1.00cm,
  line cap=round,
  line join=round,
  every node/.style={font=\small},
  >=stealth
]
  \draw[black,semithick] (0,-3.15) -- (0,4.05);
  \draw[black,semithick] (1,-3.15) -- (1,4.05);
  \node[anchor=south east] at (-0.08,4.15) {$x_1=0$};
  \node[anchor=south west] at (1.08,4.15) {$x_1=1$};
  \draw[black,->,semithick] (0.18,3.38) -- (0.82,3.38);

  \draw[blue,semithick] (-6.15,1) -- (6.15,1);
  \draw[blue,semithick] (-6.15,0) -- (6.15,0);
  \node[left=5pt] at (-6.15,1) {$x_2=1$};
  \node[left=5pt] at (-6.15,0) {$x_2=0$};
  \draw[blue,->,semithick] (-5.60,0.14) -- (-5.60,0.86);

  \draw[red,semithick] (-3.20,-3.20) -- (4.20,4.20);       
  \draw[red,semithick] (-4.20,-3.20) -- (3.20,4.20);       
  \node[anchor=south] at (3.25,4.52) {$x_3=1$};
  \node[anchor=south] at (4.75,4.52) {$x_3=0$};
  \draw[red,->,semithick] (-2.95,-2.86) -- (-3.38,-2.43);

  \draw[green!60!black,semithick] (-5.15,0) -- (6.15,0);
  \draw[green!60!black,semithick] (-6.15,-1) -- (6.15,-1);
  \node[right=5pt] at (6.15,0) {$x_4=0$};
  \node[right=5pt] at (6.15,-1) {$x_4=1$};
  \draw[green!60!black,->,semithick] (5.45,-0.14) -- (5.45,-0.86);
\end{tikzpicture}
\caption{$\R L \cap \mathcal{H}_L$ for $X_\Sigma$}
\end{figure}

\FloatBarrier

\begin{figure}[h]
\label{fig: RLHLmodL}
\centering
\resizebox{0.7\textwidth}{!}
{
\begin{tikzpicture}[
  x=1.10cm,
  y=1.10cm,
  line cap=round,
  line join=round,
  every node/.style={font=\small},
  >=stealth
]
  \coordinate (v00) at (0,0);
  \coordinate (v10) at (8,0);
  \coordinate (v11) at (8,8);
  \coordinate (v01) at (0,8);

  \draw[red,semithick] (v00) -- (v11);

  \draw[blue,semithick] (v01) -- (v11);
  \draw[green!60!black,semithick] (v00) -- (v10);
  \draw[black,semithick] (v00) -- (v01);
  \draw[black,semithick] (v10) -- (v11);

  \draw[blue,->,semithick]
    (3.35,8) -- (4.25,8);
  \draw[green!60!black,->,semithick]
    (3.85,0) -- (4.75,0);

  \draw[black,->,semithick]
    (0,3.10) -- (0,3.82);
  \draw[black,->,semithick]
    (0,3.38) -- (0,4.10);
  \draw[black,->,semithick]
    (8,3.10) -- (8,3.82);
  \draw[black,->,semithick]
    (8,3.38) -- (8,4.10);

  \draw[red,->,semithick]
    (4.55,4.55) -- (3.85,3.85);
  \draw[red,->,semithick]
    (4.25,4.25) -- (3.55,3.55);
  \draw[red,->,semithick]
    (3.95,3.95) -- (3.25,3.25);

  \fill (v00) circle (2.2pt);
  \fill (v10) circle (2.2pt);
  \fill (v11) circle (2.2pt);
  \fill (v01) circle (2.2pt);

  \node[anchor=east] at (-0.22,0)
    {$1$};

  \node[anchor=west] at (8.22,0)
    {$\displaystyle\frac{x_1y_3}{x_3y_1}$};

  \node[anchor=west] at (8.22,8)
    {$\displaystyle
      \frac{x_1x_2y_4}{x_4y_1y_2}$};

  \node[anchor=east] at (-0.22,8)
    {$\displaystyle
      \frac{x_2x_3y_4}{x_4y_2y_3}$};

  \node[anchor=south] at (4.35,8.20)
    {$\displaystyle
      \frac{x_1x_2x_3y_4}{x_4y_2}$};

  \node[anchor=north] at (3.65,7.77)
    {$E_1$};

  \node[anchor=south] at (4.35,0.20)
    {$x_1y_3$};

  \node[anchor=north] at (4.10,-0.25)
    {$E_1$};

  \node[anchor=east] at (-0.28,4.15)
    {$x_2x_3y_4$};

  \node[anchor=west] at (0.25,4.15)
    {$E_2$};

  \node[anchor=east] at (7.78,4.80)
    {$E_2$};

  \node[anchor=west] at (8.30,4.80)
    {$\displaystyle
      \frac{x_1x_2y_3y_4}{y_1}$};

  \node at (4.00,5.20)
    {$E_3$};

  \node at (3.25,4.75)
    {$x_1x_2y_4$};

  \node at (1.15,6.10)
    {$F_2$};

  \node at (2.35,6.10)
    {$x_1x_2x_3y_4$};

  \node at (5.15,3.15)
    {$F_1$};

  \node at (5.65,2.45)
    {$x_1x_2y_3y_4$};
\end{tikzpicture} }
\caption{Fundamental domain for
  $\faktor{(\R L \cap \mathcal{H}_L)}{L}$}
\label{fig: RLHLmodL}

\end{figure}
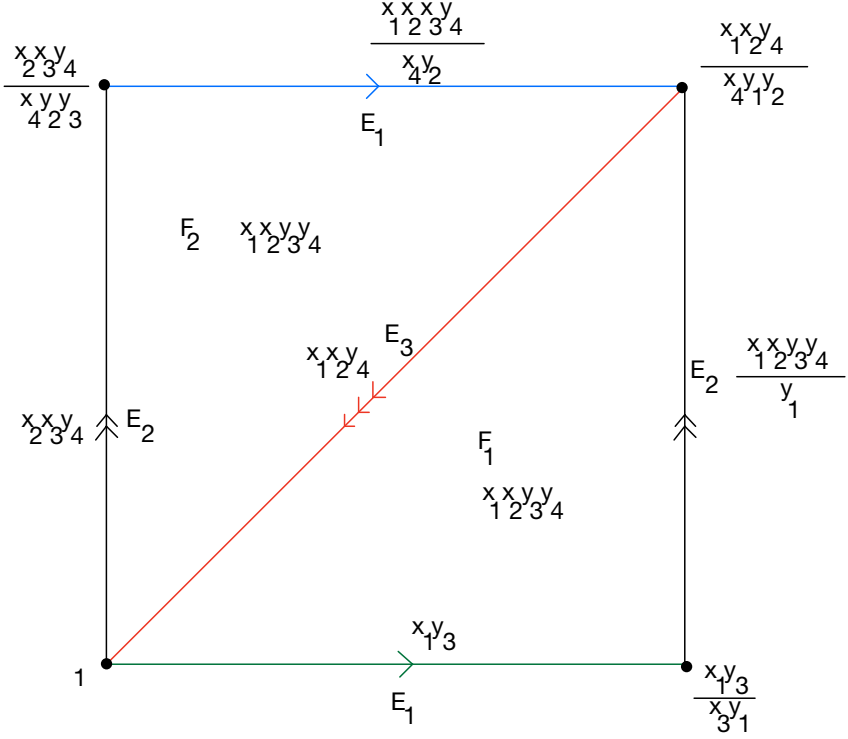 

\FloatBarrier

The quotient cellular complex $\faktor{(\R L \cap \mathcal{H}_L )}{L}$ is given in Figure~\hyperref[fig: RLHLmodL]{2} with monomial labelings from $M_{\Lambda(L)}$, which are graded by $\mathrm{Cl}(X_\Sigma)$. The monomial labelings are collected in Table~\hyperref[table: monlab]{1}.

\begin{table}[h]
\label{table: monlab}
\caption{Monomial labelings on $\faktor{(\R L\cap \mathcal{H}_L)}{L}$}
\[ \begin{array}{c|c}
v & 1, \frac{x_1y_3}{x_3y_1}, \frac{x_1x_2y_4}{x_4y_1y_2},\frac{x_2x_3y_4}{x_4y_2y_3} \text{ starting from the origin, in counter-clockwise order}\\
\hline 
E_1 & x_1 y_3, \frac{x_1x_2x_3y_4}{x_4y_2} \text{ from bottom to top}\\
\hline
E_2 & \frac{x_1 x_2 y_3 y_4 }{y_1}, x_2x_3y_4 \text{ from right to left} \\
\hline
E_3 & x_1 x_2 y_4 \\ 
\hline
F_1 & x_1 x_2 y_3 y_4\\
\hline
F_2 & x_1 x_2 x_3 y_4\\
\hline
\end{array}\]
\end{table}

The boundary maps are given by
\begin{align*}
    \partial^{-2} &= \left(\begin{matrix} x_2y_4 & x_4 y_2\\
    y_1 & x_1 \\ y_3 & x_3 \end{matrix}\right)
\end{align*}
and
\begin{align*}
    \partial^{-1} &= \left(\begin{matrix} x_3y_1-x_1y_3, & x_4 y_2 y_3 - x_2 x_3 y_4, & x_1 x_2 y_4 - x_4 y_1 y_2 \end{matrix}\right). 
\end{align*}

\flushleft
Set
\[
\begin{aligned}
F^{-2}={}&S(-|x_1x_2y_3y_4|)\\
&\oplus S(-|x_1x_2x_3y_4|),\\[0.3em]
F^{-1}={}&S(-|x_1y_3|)\\
&\oplus S\left(-\left|\frac{x_1x_2y_3y_4}{y_1}\right|\right)\\
&\oplus S(-|x_1x_2y_4|).
\end{aligned}
\]
Together, these give the $(\Z^{2n}/\Lambda(L))$-graded $S{\text-}$free resolution
\[
0\longrightarrow F^{-2}
\xrightarrow{\partial^{-2}}F^{-1}
\xrightarrow{\partial^{-1}}S
\longrightarrow\faktor{S}{J_L}
\longrightarrow0.
\]

Set
\[
\begin{aligned}
\mathcal{E}^{-2}={}&
\mathcal{O}(-1,-1)\boxtimes\mathcal{O}(-1,-2)\\
&\oplus
\mathcal{O}(-1,-2)\boxtimes\mathcal{O}(-1,-1),\\[0.3em]
\mathcal{E}^{-1}={}&
\mathcal{O}(0,-1)\boxtimes\mathcal{O}(0,-1)\\
&\oplus
\mathcal{O}(-1,-1)\boxtimes\mathcal{O}(-1,-1)\\
&\oplus
\mathcal{O}(-1,-1)\boxtimes\mathcal{O}(-1,-1).
\end{aligned}
\]
Via Cox's theorem, this corresponds to the locally free resolution
\[
0\longrightarrow\mathcal{E}^{-2}
\longrightarrow\mathcal{E}^{-1}
\longrightarrow\mathcal{O}
\longrightarrow\mathcal{O}_\Delta
\longrightarrow0
\]
in $\mathrm{Coh}(X_\Sigma\times X_\Sigma)$.

\section{Resolving the diagonal for smooth, non-unimodular toric varieties in families} \label{sec: deformedResolution}

\justifying

The goal of this section is to construct the deformed cellular complexes and prove Proposition~\ref{prop: main}, which identifies the homological degree~$0$ cokernel after saturation by the irrelevant ideal. The remaining homology is treated in Section~\ref{sec:torsion}. When $X_\Sigma$ is smooth and non-unimodular, the intersection of the infinite hyperplane arrangement $\mathcal{H}_L$ with $\R L$ will have more lattice points than just what appears in $L$ \cite[Proposition 2.2]{bayer-popescu-sturmfels} in the sense that

\[
(\mathcal{H}_L\cap\R L)\setminus L\neq\emptyset.
\]
Choose a class $\epsilon\in\mathrm{Cl}(X_\Sigma)_\R$ and a lift $a=(a_1,\ldots,a_n)\in\R^n$ satisfying $\pi_\R(a)=\epsilon$. We deform the slice $\R L\cap\mathcal{H}_L$ by requiring at least one coordinate of
\begin{align}\label{eqn: deformation}
 z_1 \vec{C_1} + \cdots + z_m \vec{C_m} + \sum_{i=1}^n a_i e_i
\end{align}
to be an integer; the resulting arrangement is denoted $\mathcal{H}_L^\epsilon$. Here, $z_1,\dots,z_m$ are coordinates on $\R L$, $m$ is the rank of $M$, the vectors $\vec{C}_1,\dots,\vec{C}_m$ are the columns of the matrix $B$ from \eqref{eq:toric.fund.seq.unimod}, and $e_1,\dots,e_n$ are the standard basis vectors of $\R^n$. The undeformed arrangement is recovered by taking $a=0$.

\begin{definition}\label{def:circuit-generic}
A vertex of $\mathcal{H}_L^\epsilon$ is called \textbf{transverse} if the normals of all hyperplanes passing through it are linearly independent; equivalently, exactly $m=\dim(\R L)$ hyperplanes meet there. For $1\leq i\leq n$, let $\lambda_i:\R L\to\R$ be the restriction of the $i$-th coordinate function. A \textbf{primitive circuit} is a primitive vector $c=(c_1,\dots,c_n)\in\Z^n$ whose support is minimal among the supports of nonzero integral relations
\[
\sum_{i=1}^n c_i\lambda_i=0.
\]
We call the lift $a$ (and the corresponding deformation) \textbf{circuit-generic} if
\[
\langle c,a\rangle\notin\Z
\qquad\text{for every primitive circuit }c.
\]
\end{definition}

\begin{lemma}\label{lem:circuit-transversality}
Every vertex of $\mathcal{H}_L^\epsilon$ is transverse if and only if the lift $a$ is circuit-generic.
\end{lemma}

\begin{proof}
Write
\[
H_{i,k}(a)=\{z\in\R L\mid \lambda_i(z)+a_i=k\},
\qquad k\in\Z.
\]
Suppose that the hyperplanes indexed by the support of a primitive circuit $c$ meet at a point $z$. Setting $k_i=\lambda_i(z)+a_i\in\Z$ gives
\[
\langle c,a\rangle
=\sum_i c_i k_i-\sum_i c_i\lambda_i(z)
=\sum_i c_i k_i\in\Z.
\]
Conversely, suppose that $\langle c,a\rangle\in\Z$. Since $c$ is primitive, there are integers $k_i$, indexed by the support of $c$, with $\sum_i c_i k_i=\langle c,a\rangle$. The circuit relation is the only linear relation among the corresponding $\lambda_i$, so the equations $\lambda_i(z)+a_i=k_i$ are consistent. An independent subset of these equations can be extended, using the coordinate restrictions $\lambda_j$, to a basis of $(\R L)^*$; assigning integer levels to the additional equations produces a vertex at which the circuit hyperplanes also meet. That vertex is nontransverse. Thus nontransverse vertices occur exactly on the circuit-integrality locus.
\end{proof}

\begin{remark}\label{rem:effectivity-transversality}
The transversality condition is independent of effectivity. Replacing $a$ by $a+\ell$ with $\ell\in\R L$ leaves the affine slice $\R L+a$ unchanged, while replacing $a$ by $a+u$ with $u\in\Z^n$ does not change the arrangement, because the defining conditions are congruences modulo $\Z$. Hence transversality depends only on the image of $\epsilon$ in
\[
\mathrm{Cl}(X_\Sigma)_\R/\mathrm{Cl}(X_\Sigma),
\]
and the nontransverse locus is the finite union of circuit hypertori defined by $\langle c,a\rangle\in\Z$. Membership in the effective cone is therefore not required for transversality. When a nonempty toric GIT phase is needed, we impose separately that $\epsilon$ lie in the relative interior of the desired chamber of the effective cone. For a small perturbation one may choose a direction $\eta\in\R^n$ with $\langle c,\eta\rangle\neq0$ for every primitive circuit and set $a=t\eta$ for sufficiently small $t\neq0$; the image of $\eta$ may be chosen inside any prescribed open secondary-fan chamber.
\end{remark}

Whenever transversality of $\mathcal{H}_L^\epsilon$ is used below, we assume that the chosen lift $a$ is circuit-generic. Proposition~\ref{prop: main} shows that the homological degree~$0$ cokernel agrees with the diagonal after saturating by the irrelevant ideal even when $\epsilon=0$; controlling the remaining homology requires additional hypotheses, as indicated by Remark~\ref{rem:counterexample} and by the argument in Section~\ref{sec:torsion}. To assign a monomial in $M_{\Lambda(L)}$ to each vertex, we take the floor function of each coordinate of every vertex in $\mathcal{H}_L^\epsilon$. These labels specialize to the monomials appearing in Bayer--Popescu--Sturmfels' result when $X_\Sigma$ is unimodular, though the floor-function labeling used here is new.

\begin{definition}\label{def:deformed-complex}
Let $P=(\mathcal{H}_L^\epsilon/L)$ be the finite quotient cell complex, and orient each cell of $P$ once and for all. For each $i\geq 0$, write $P_i$ for the set of oriented $i$-dimensional faces. For a vertex $v$ represented by a point $p=(p_1,\ldots,p_n)\in\mathcal{H}_L^\epsilon(0)$, set
\[
  m_v=\frac{x^{\lfloor p\rfloor}}{y^{\lfloor p\rfloor}}
  =\prod_{i=1}^n x_i^{\lfloor p_i\rfloor}y_i^{-\lfloor p_i\rfloor}
  \in S_{\prod_i x_i y_i}.
\]
For a face $F$ of $P$, define $m_F$ to be the Laurent lowest-common-multiple of the labels $m_v$ over all vertices $v$ of $F$, computed coordinate-wise on exponent vectors. The cellular complex $(\mathcal{F}^\bullet_{\mathcal{H}_L^\epsilon/L},\partial^\epsilon)$ is the complex of free $S$-modules with
\[
  \mathcal{F}_i^\epsilon=\bigoplus_{F\in P_i} S(-|m_F|)e_F,
\]
and differential
\[
  \partial_i^\epsilon(e_F)=\sum_{F'\lessdot F} \operatorname{inc}(F',F)\,\frac{m_F}{m_{F'}}e_{F'},
\]
where $F'\lessdot F$ means that $F'$ is a codimension-one face of $F$, and $\operatorname{inc}(F',F)\in\{\pm 1\}$ is the incidence sign determined by the chosen orientations. The augmentation sends a vertex basis element $e_v$ to its monomial label $m_v$ in $S_{\prod_i x_i y_i}$. When $\epsilon=0$, this is denoted $(\mathcal{F}^\bullet_{\mathcal{H}_L/L},\partial)$.
\end{definition}

When $X_\Sigma$ is unimodular, a convexity argument in \cite[Proposition 1.1]{bayer-sturmfels} shows that $X_{\preceq\mathbf{b}}$, the subcomplex of $\faktor{(\R L\cap\mathcal{H}_L)}{L}$ on the vertices of degree $\preceq\mathbf{b}$, has vanishing reduced homology with coefficients in $\Bbbk$ for every degree $\mathbf{b}\in\Z^n$. For a general deformation, two independent conditions play different roles. Circuit-genericity of the lift $a$ guarantees that the vertices of $\mathcal{H}_L^\epsilon$ are transverse. Separately, placing the class $\epsilon$ in the relative interior of a secondary-fan chamber determines a toric GIT phase $\Sigma_\epsilon$ and its irrelevant ideal. Proposition~\ref{prop: main} shows that the homological degree~$0$ cokernel is torsion with respect to that irrelevant ideal. This assertion alone does not control the kernel of the augmentation or the higher homology of $(\mathcal{F}^\bullet_{\mathcal{H}_L^\epsilon/L},\partial^\epsilon)$; those groups require an additional argument. For the undeformed complex, Section~\ref{sec:torsion} supplies that argument under the central symmetry hypothesis. 
  
An example of the deformed complex $\mathcal{H}_L^\epsilon$ for $Bl_p\PP^2$ is given in Section~\ref{sec: deformed}. 

\subsection{Cokernel of $\partial^\epsilon_0$}
\label{subsec:coker-general}

Here, we consider the map
\begin{align}\label{eqn: mapf} (\mathcal{F}^\bullet_{\mathcal{H}_L^\epsilon}, \partial^\epsilon) \stackrel{f}{\rightarrow} S_{\prod_i x_i y_i} \end{align}
given by $\bigoplus_{v\in \mathcal{H}_L^\epsilon(0)}S(-|m_v|) \rightarrow S_{\prod_i x_i y_i}$ where a vertex in $\mathcal{H}_L^\epsilon(0)$ maps to its vertex monomial label in $M_{\Lambda(L)}$ from Definition~\ref{def: monlabel}. Recall that for $A$ a commutative ring with $1$, we say that an $A-$module $M$ is $I$-torsion for $I$ an ideal of $A$, if and only if there exists a positive integer $k$ such that $I^k M = 0$. For $X_\Sigma$ smooth and projective with no torus factors, tensoring \eqref{eq: fundexactseq} with $\R$ yields
\[
0\longrightarrow L_\R\longrightarrow\R^n
\xrightarrow{\pi_\R}\mathrm{Cl}(X_\Sigma)_\R\longrightarrow0,
\qquad \epsilon=\pi_\R(a).
\]
The effective cone is
\[
\operatorname{Eff}(X_\Sigma)
=\pi_\R(\R_{\geq 0}^n)
=\operatorname{cone}([D_1],\ldots,[D_n]).
\]
Thus the image of an arbitrary $a\in\R^n$ need not be effective: the class $\epsilon=\pi_\R(a)$ lies in $\operatorname{Eff}(X_\Sigma)$ exactly when the coset $a+L_\R$ meets $\R_{\geq0}^n$. For the GIT discussion in this subsection, we therefore assume separately that $\epsilon$ lies in the relative interior of a chamber of the secondary fan contained in the effective cone. A lift $a$ determines a convex piecewise-linear function $g_a:N_\R\to\R$; changing the lift by an element of $L_\R$ adds a global linear function, so the maximal domains of linearity depend only on $\epsilon$. They form a regular fan $\Sigma_\epsilon$, which is constant as $\epsilon$ varies within the chamber. This chamber assumption selects the toric GIT phase and its irrelevant ideal; it is not the condition that makes $\mathcal{H}_L^\epsilon$ transverse. For the general argument below, we assume that the selected phase $X_{\Sigma_\epsilon}$ is smooth. Let
\[
I_\emptyset(\epsilon)=\{\rho_i\in\Sigma(1)\mid \rho_i\text{ is not contained in any maximal cone of }\Sigma_\epsilon\}.
\]
If the Cox ring is still written with variables indexed by the original set $\Sigma(1)$, then the irrelevant ideal for $\Sigma_\epsilon$ in the $x$-variables is
\[
B_{\Sigma_\epsilon}(x)=\left\langle \prod_{\rho_i\notin\sigma(1)}x_i \text{ }\bigg|\text{ } \sigma\in\Sigma_\epsilon(n)\right\rangle,
\]
so the variables with $\rho_i\in I_\emptyset(\epsilon)$ appear in every generator. The irrelevant ideal for $\Sigma_\epsilon\times\Sigma_\epsilon$ is
\[
I_{irr}(\Sigma_\epsilon\times\Sigma_\epsilon)
= B_{\Sigma_\epsilon}(x)\cap B_{\Sigma_\epsilon}(y)
= \left\langle \prod_{\rho_i\notin\sigma(1)}x_i \mid \sigma\in\Sigma_\epsilon(n)\right\rangle
\cap \left\langle \prod_{\rho_i\notin\sigma(1)}y_i \mid \sigma\in\Sigma_\epsilon(n)\right\rangle .
\]
To prove Proposition~\ref{prop: main} below, we require the following Lemmata~\ref{lemma: l1}-\ref{lemma: poly}. Lemma~\ref{lemma: l1} allows us to consider lattice translates of vertices $p\in \mathcal{H}_L^\epsilon(0)$. Lemma~\ref{lemma: seplemma} separates convex sets by a hyperplane. Lemma~\ref{lemma: poly} allows us to work with polynomials, rather than Laurent polynomials by clearing denominators.

\begin{lemma}\label{lemma: l1}
 Given $p\in \mathcal{H}_L^\epsilon(0)$ and $\sigma_1,\sigma_2\in\Sigma_\epsilon(n)$, there exists $k>0$ with $(x^{\hat{\sigma}_1}y^{\hat{\sigma}_2})^k \cdot m_p \in M_{\Lambda(L)}$ if and only if for all $v\in L$ there exists an $\ell>0$ such that $(x^{\hat{\sigma}_1}y^{\hat{\sigma}_2})^\ell \cdot m_{p+v} \in M_{\Lambda(L)}$. \end{lemma}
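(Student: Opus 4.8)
The plan is to reduce the entire biconditional to one observation: the floor-function monomial label is equivariant under lattice translation, up to multiplication by a Laurent monomial whose exponent lies in the Lawrence lifting $\Lambda(L)$, and such a multiplication preserves $M_{\Lambda(L)}$. The forward implication is then trivial, and the reverse implication follows by transporting the membership back by the inverse monomial.

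First I would record the translation identity for the label. Since $L \subseteq \Z^n$, any $v \in L$ is an integer vector, so $\lfloor p + v \rfloor = \lfloor p \rfloor + v$ holds coordinate-wise. Recalling that the label attached to a vertex $p \in \mathcal{H}_L^\epsilon(0)$ is obtained by applying Definition~\ref{def: monlabel} to $\lfloor p \rfloor$, i.e.\ $m_p = x^{\lfloor p \rfloor} y^{-\lfloor p \rfloor}$, this yields immediately
$$ m_{p+v} = x^{\lfloor p\rfloor + v}\, y^{-\lfloor p\rfloor - v} = m_p \cdot x^v y^{-v}, $$
where the Laurent monomial $x^v y^{-v}$ has exponent vector $(v,-v) \in \Lambda(L)$.

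Next I would verify that multiplication by $x^v y^{-v}$ is an automorphism of the lattice module $M_{\Lambda(L)}$. Writing $M_{\Lambda(L)} = \Bbbk\{x^c y^d \mid (c,d) \in \N^{2n} + \Lambda(L)\}$, adding $(v,-v) \in \Lambda(L)$ to an exponent $(c,d) \in \N^{2n} + \Lambda(L)$ keeps it in $\N^{2n} + \Lambda(L)$ because $\Lambda(L)$ is a group; the same holds for the inverse shift by $(-v,v)$. Hence multiplication by $x^v y^{-v}$ is a $\Bbbk$-linear bijection on the monomial basis of $M_{\Lambda(L)}$ commuting with the $R$-action (as $T$ is commutative), so it is an $R$-module automorphism of $M_{\Lambda(L)}$ with inverse multiplication by $x^{-v} y^v$. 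With these two facts the biconditional is formal: the forward direction takes $v = 0$, $\ell = k$; for the reverse, given $(x^{\hat{\sigma}_1} y^{\hat{\sigma}_2})^\ell m_{p+v} \in M_{\Lambda(L)}$, substituting the translation identity gives $(x^{\hat{\sigma}_1} y^{\hat{\sigma}_2})^\ell m_p \cdot x^v y^{-v} \in M_{\Lambda(L)}$, and applying the automorphism ``multiplication by $x^{-v} y^v$'' produces $(x^{\hat{\sigma}_1} y^{\hat{\sigma}_2})^\ell m_p \in M_{\Lambda(L)}$, so $k = \ell$ works.

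The single point requiring care — and the one I would flag as the main, if modest, obstacle — is ensuring the floor identity $\lfloor p + v \rfloor = \lfloor p \rfloor + v$ genuinely applies, which is exactly where the hypothesis $v \in L \subseteq \Z^n$ (rather than an arbitrary element of $\R L$) is essential, since the identity fails for non-integer shifts. Everything beyond this is bookkeeping with the group structure of $\Lambda(L)$, and notably the argument is insensitive to the choice of $\sigma_1, \sigma_2 \in \Sigma(n)$, since the factor $(x^{\hat{\sigma}_1} y^{\hat{\sigma}_2})^\ell$ is unaffected by the translation.
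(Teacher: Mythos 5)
Your proof is correct and follows essentially the same route as the paper's: the floor identity $\lfloor p+v\rfloor=\lfloor p\rfloor+v$ for integer $v$, the resulting factorization $m_{p+v}=m_p\cdot x^vy^{-v}$, and closure of $M_{\Lambda(L)}$ under multiplication by $x^{\pm v}y^{\mp v}$ (the paper phrases this as closure under the $S[L]$-action) in both directions. Your framing of the translation as an $S$-module automorphism is a slightly cleaner packaging of the same idea, and correctly avoids the paper's unnecessary restriction to $p\in L$ when stating $m_{p+v}=m_p\cdot m_v$.
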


\begin{proof} Any $v\in L$ has integer coordinates so that $\lfloor v \rfloor = v$ and $m_v = \frac{x^{\lfloor v \rfloor}}{y^{\lfloor v \rfloor }} = \frac{ x^v}{y^v}$. This implies that adding $v\in L$ to $p$ shifts all coordinates by only integer amounts, so that no cancellation occurs when we take a floor function: \[ \lfloor p+v \rfloor = \lfloor p \rfloor + v \]  

In particular, note that if both $p$ and $v$ are in $L$, then $m_{p+v}=m_p \cdot m_v$ since $p, v \in L$ implies $p+v\in L$. So if there exists $k>0$ with $(x^{\hat{\sigma}_1}y^{\hat{\sigma}_2})^k \cdot m_p \in M_{\Lambda(L)}$, then $(x^{\hat{\sigma}_1}y^{\hat{\sigma}_2})^k\cdot m_{p+v} = (x^{\hat{\sigma}_1}y^{\hat{\sigma}_2})^k\cdot m_p m_v \in M_{\Lambda(L)}$ since $M_{\Lambda(L)}$ is closed under the action of $S[L]$. Now closure of $M_{\Lambda(L)}$ under the action of $S[L]$ also proves the reverse direction. \end{proof}

\begin{lemma}\label{lemma: seplemma} \textbf{Separation Lemma} \cite[ (12) in Section 1.2]{fulton}. If $\sigma$ and $\sigma'$ are convex polyhedral cones whose intersection $\tau$ is a face of each, then there is a $u$ in $\sigma^\vee \cap (-\sigma')^\vee$ with \[ \tau = \sigma \cap u^\perp = \sigma' \cap u^\perp. \] \end{lemma}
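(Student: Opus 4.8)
The plan is to realize the separating functional as a generic element of the dual of a single auxiliary cone built from $\sigma$ and $\sigma'$, so that the sign conditions come for free from cone duality. Concretely, I would set $\gamma := \sigma + (-\sigma') = \{\, s - s' : s\in\sigma,\ s'\in\sigma' \,\}$, which is again a convex polyhedral cone and which contains both $\sigma$ and $-\sigma'$ (each cone contains the origin). The standard duality $(C_1 + C_2)^\vee = C_1^\vee \cap C_2^\vee$ then gives $\gamma^\vee = \sigma^\vee \cap (-\sigma')^\vee$, so that \emph{any} $u \in \gamma^\vee$ is automatically nonnegative on $\sigma$ and nonpositive on $\sigma'$. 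This reduces the problem to producing one $u \in \gamma^\vee$ with the correct vanishing locus.

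Next I would locate that vanishing locus inside the lineality space $W := \gamma \cap (-\gamma)$, the largest linear subspace contained in $\gamma$. A one-line check gives $\tau \subseteq W$: if $v \in \tau = \sigma \cap \sigma'$ then $v = v - 0 \in \gamma$ and $-v = 0 - v \in \gamma$. The heart of the argument is the reverse containment $\sigma \cap W = \tau$ (and symmetrically $\sigma' \cap W = \tau$), and this is exactly where the face hypothesis enters. Because $\tau$ is a face of $\sigma$, there is $m \in \sigma^\vee$ with $\tau = \sigma \cap m^\perp$ and $m > 0$ on $\sigma \setminus \tau$. Given $v \in \sigma \cap W$, writing $-v = a - b$ with $a \in \sigma$, $b \in \sigma'$ forces $b = a + v \in \sigma \cap \sigma' = \tau$; evaluating $m$ on $b = a + v$ and using $m(b) = 0$ together with $m(a) \geq 0$ and $m(v) \geq 0$ pins down $m(v) = 0$, whence $v \in \tau$ by the face property. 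The symmetric statement for $\sigma'$ uses the supporting functional $m'$ of $\tau$ in $\sigma'$.

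Finally I would choose $u$ in the relative interior of $\gamma^\vee$. For a polyhedral cone the relative-interior functionals of the dual cut out precisely the minimal face, which is the lineality space, so $\gamma \cap u^\perp = W$; and since $u^\perp = -u^\perp$ we also get $(-\gamma) \cap u^\perp = W$. Intersecting with $\sigma \subseteq \gamma$ and with $\sigma' \subseteq -\gamma$ then yields $\sigma \cap u^\perp = \sigma \cap W = \tau$ and $\sigma' \cap u^\perp = \sigma' \cap W = \tau$, which, together with $u \in \gamma^\vee = \sigma^\vee \cap (-\sigma')^\vee$, is exactly the assertion.

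The step I expect to be the main obstacle is establishing $\sigma \cap W = \tau$: it is the only place where the hypothesis that $\tau$ is a common \emph{face} (rather than merely the set-theoretic intersection) is genuinely used, and getting it right requires combining the supporting-functional characterization of faces with the cone-closure argument above. The remaining ingredients — polyhedrality of $\gamma$, the duality $\gamma^\vee = \sigma^\vee \cap (-\sigma')^\vee$, and the relative-interior/minimal-face correspondence — are standard facts about rational polyhedral cones that I would cite rather than reprove.
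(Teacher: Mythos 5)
The paper gives no proof of this lemma --- it is quoted verbatim from Fulton's \emph{Introduction to Toric Varieties} --- and your argument is correct and is essentially Fulton's own: the auxiliary cone $\gamma = \sigma + (-\sigma')$, the duality $\gamma^\vee = \sigma^\vee \cap (-\sigma')^\vee$, and a $u$ in the relative interior of $\gamma^\vee$ cutting out the minimal face. The only (cosmetic) deviation is that Fulton first identifies the lineality space as $\tau - \tau$ and then intersects with $\sigma$, whereas you prove $\sigma \cap W = \tau$ directly via the supporting functional of the face; both routes use the face hypothesis in the same essential way.
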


\begin{lemma}\label{lemma: poly} Given $p\in\mathcal{H}_L^\epsilon(0)$, for all $\sigma_1,\sigma_2\in\Sigma_\epsilon(n)$, there exists $u\in L, k\in \mathbb{N}$ and $\alpha,\beta\in \mathbb{N}^{\Sigma(1)}$ such that $(x^{\hat{\sigma}_1}y^{\hat{\sigma}_2})^km_p = x^\alpha y^\beta m_u \in M_{\Lambda(L)}$. \end{lemma}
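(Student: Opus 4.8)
The plan is to translate the asserted monomial identity into a statement about lattice points and then to produce the witness $u$ explicitly. First I would record the two bookkeeping facts that make everything transparent: by the floor-function labeling, $m_p = x^{\lfloor p\rfloor}/y^{\lfloor p\rfloor}$ has exponent vector $(\lfloor p\rfloor, -\lfloor p\rfloor)\in\Z^{2n}$, and a Laurent monomial $x^a y^b$ lies in $M_{\Lambda(L)}$ exactly when $(a,b)\in\N^{2n}+\Lambda(L)$, i.e. when there is $u\in L$ with $a-u\ge 0$ and $b+u\ge 0$ coordinatewise. Writing $e_{\hat\sigma}\in\N^n$ for the exponent vector of $x^{\hat\sigma}=\prod_{\rho\notin\sigma(1)}x_\rho$ (the indicator of the rays outside $\sigma$), the desired equality $(x^{\hat\sigma_1}y^{\hat\sigma_2})^k m_p = x^\alpha y^\beta m_u$ becomes the requirement to find $k\in\N$ and $u\in L$ with $\alpha:=\lfloor p\rfloor + k\,e_{\hat\sigma_1}-u\ge 0$ and $\beta:=u-\lfloor p\rfloor+k\,e_{\hat\sigma_2}\ge 0$; membership in $M_{\Lambda(L)}$ is then automatic, since $\alpha,\beta\ge 0$ and $u\in L$.

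Next I would split the coordinate constraints into those that the slack $k$ cannot relax and those it can. On a ray $\rho_i\in\sigma_1(1)$ we have $e_{\hat\sigma_1,i}=0$, so $\alpha_i\ge 0$ forces $u_i\le\lfloor p_i\rfloor$; on a ray $\rho_i\in\sigma_2(1)$ we have $e_{\hat\sigma_2,i}=0$, so $\beta_i\ge 0$ forces $u_i\ge\lfloor p_i\rfloor$; on the common face $\tau(1)=\sigma_1(1)\cap\sigma_2(1)$ both apply and force $u_i=\lfloor p_i\rfloor$. On every ray outside $\sigma_1(1)\cup\sigma_2(1)$ both $e_{\hat\sigma_1,i}$ and $e_{\hat\sigma_2,i}$ equal $1$, so the corresponding inequalities hold once $k$ exceeds $|u_i-\lfloor p_i\rfloor|$. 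Thus the whole lemma reduces to producing a single $u\in L$ with $u_i\le\lfloor p_i\rfloor$ for $\rho_i\in\sigma_1(1)$ and $u_i\ge\lfloor p_i\rfloor$ for $\rho_i\in\sigma_2(1)$, after which $k$ is taken large enough to absorb the remaining finitely many loose inequalities.

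To build such a $u$ I would combine smoothness with the Separation Lemma~\ref{lemma: seplemma}. Because $X_\Sigma$ is smooth, the primitive generators of the maximal cone $\sigma_1$ form a $\Z$-basis of $N$, so the composite $M\xrightarrow{B}L\to\Z^{\sigma_1(1)}$ (projection onto the $\sigma_1$-coordinates) is an isomorphism; let $u_0\in L$ be the unique element with $(u_0)_i=\lfloor p_i\rfloor$ for all $\rho_i\in\sigma_1(1)$, which already realizes the $\sigma_1$-side with equality. Applying Lemma~\ref{lemma: seplemma} to $\sigma_1,\sigma_2$ with common face $\tau$ yields $w\in M$ with $\langle w,u_\rho\rangle> 0$ for $\rho\in\sigma_1(1)\setminus\tau(1)$, $\langle w,u_\rho\rangle< 0$ for $\rho\in\sigma_2(1)\setminus\tau(1)$, and $\langle w,u_\rho\rangle=0$ on $\tau(1)$ (the strictness off $\tau$ follows because $\langle w,u_\rho\rangle=0$ with $u_\rho\in\sigma_j$ would put $u_\rho\in\sigma_j\cap w^\perp=\tau$). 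Equivalently, the lattice vector $Bw\in L$ is strictly positive on $\sigma_1(1)\setminus\tau(1)$, strictly negative on $\sigma_2(1)\setminus\tau(1)$, and zero on $\tau(1)$. I would then set $u=u_0-t\,Bw$ for a large positive integer $t$: on $\tau(1)$ the value stays $\lfloor p_i\rfloor$; on $\sigma_1(1)\setminus\tau(1)$ subtracting a positive quantity keeps $u_i<\lfloor p_i\rfloor$; and on $\sigma_2(1)\setminus\tau(1)$ the term $-t\,(Bw)_i$ grows, so for $t$ large $u_i\ge\lfloor p_i\rfloor$. This $u\in L$ meets all hard constraints at once, and a final choice of $k$ settles the loose ones.

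The step I expect to be the genuine obstacle is exactly this simultaneous fulfillment of the $\sigma_1$- and $\sigma_2$-constraints: each one alone is immediate from the projection isomorphism, but a lattice point pinned on $\sigma_1$ has no reason to respect the $\sigma_2$-inequalities. The separating functional from Lemma~\ref{lemma: seplemma} is precisely what reconciles them, since its sign pattern is adapted to the two cones and vanishes on their common face, letting me push $u_0$ into the required region without disturbing the $\sigma_1$-coordinates. I would conclude by recording the explicit $\alpha,\beta$ produced above and noting that, consistently with Lemma~\ref{lemma: l1}, the construction is unaffected by lattice translation of $p$.
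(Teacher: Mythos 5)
Your proposal is correct and follows essentially the same route as the paper's proof: both use smoothness of $X_\Sigma$ to produce a lattice element of $L$ matching $\lfloor p\rfloor$ on prescribed ray coordinates, then invoke the Separation Lemma to get an element of $L$ that vanishes on $\tau(1)$ and has opposite strict signs on $\sigma_1(1)\setminus\tau(1)$ and $\sigma_2(1)\setminus\tau(1)$, and finally absorb the remaining coordinates with a large multiple and a large power of $x^{\hat\sigma_1}y^{\hat\sigma_2}$. Your additive reformulation in terms of the exponent conditions $\alpha=\lfloor p\rfloor+k e_{\hat\sigma_1}-u\ge 0$, $\beta=u-\lfloor p\rfloor+k e_{\hat\sigma_2}\ge 0$ is a cleaner bookkeeping of the same argument (the paper instead tracks denominators indexed by the sets $A_\pm$ and pins $u_1$ only on the common face), but the underlying ideas coincide.
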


\begin{proof}
The lemma holds if and only if there exists $u\in L, k\in \mathbb{N}$ and $\alpha,\beta\in \mathbb{N}^{\Sigma(1)}$ such that

\begin{align*}
(x^{\hat{\sigma}_1}y^{\hat{\sigma}_2})^k m_p m_{-u} &= x^\alpha y^\beta \in S \end{align*}

Let $\lfloor p_{\rho_i}\rfloor := \tilde{a}_{\rho_i}$ for all $i$ so that 

\begin{align*}
\lfloor p \rfloor &= (\lfloor p_{\rho_1} \rfloor, \dots, \lfloor p_{\rho_{\Sigma(1)}}\rfloor)\\
&= (\tilde{a}_{\rho_1}, \dots, \tilde{a}_{\rho_{|\Sigma(1)|}}) \end{align*}

and define the sets $A_+, A_-$ such that 

\begin{align*}
m_p &= \frac{ \prod_{A_+}x_\rho^{a_\rho} \prod_{A_-} y_\rho^{a_\rho}}{\prod_{A_+} y_\rho^{a_\rho} \prod_{A_-} x_\rho^{a_\rho}} \end{align*}
with $a_\rho = |\tilde{a_\rho}|$ for all $\rho$.

Since $X_{\Sigma_\epsilon}$ is smooth, there exists $u_1\in L$ such that $u_1(e_\rho)=a_\rho$ for all $\rho \in (\sigma_1 \cap \sigma_2)(1)=\sigma_1(1)\cap\sigma_2(1)$ (this condition is vacuous if $(\sigma_1\cap\sigma_2)(1)=\emptyset$). Let $\tau:=\sigma_1\cap\sigma_2$.

Applying Separation Lemma~\ref{lemma: seplemma} to $\sigma_1$ and $\sigma_2$ yields an element $u\in \sigma_1^\vee \cap (-\sigma_2)^\vee$ such that $\tau=\sigma_1\cap u^\perp=\sigma_2\cap u^\perp$. Identifying $u$ with its image in $\Z^{|\Sigma(1)|}$ via $B: M\to \Z^{|\Sigma(1)|}$, we obtain $u_2\in L$ such that
\[u_2\big|_{\tau}=0,\qquad u_2\big|_{\sigma_1\setminus \tau}>0,\qquad \text{and}\qquad u_2\big|_{\sigma_2\setminus \tau}<0.\]

Since $x^{\hat{\sigma}_1}y^{\hat{\sigma}_2}$ is a generator of $I_{irr}$, up to torsion by $I_{irr}$ it suffices to consider only factors (over $S$) in the numerator and denominator of $m_p$ for variables $x_\rho$ and $y_\rho$ for $\rho \in \sigma_1\cup \sigma_2$. The idea here is that up to a high enough power of the generator from $I_{irr}$, we can clear denominators such that the only variables remaining in the denominator correspond to rays in $\sigma_1 \cup \sigma_2$. 

Now there exists $k_0 \in \mathbb{N}$ such that \\


\begin{align*}
(x^{\hat{\sigma}_1}y^{\hat{\sigma}_2})^{k_0} m_p &= \frac{ f_1}{\prod_{\sigma_2\cap A_+}y_\rho^{a_\rho} \prod_{\sigma_1 \cap A_-}x_\rho^{a_\rho}} \\
&= \frac{f_2}{\prod_{\sigma_1\cap\sigma_2\cap A_+}y_\rho^{a_\rho} \prod_{(\sigma_2\setminus \sigma_1) \cap A_+}y_\rho^{a_\rho}\prod_{\sigma_1\cap \sigma_2\cap A_-}x_\rho^{a_\rho}\prod_{(\sigma_1\setminus\sigma_2)\cap A_-}x_\rho^{a_\rho}} \end{align*}

by subdivision of sets $A_+, A_-$, for some monomials $f_1, f_2\in S$. Now $m_pm_{-u_1} = m_{p-u_1}$ since $u_1\in L$ gives that there exists $k_1\in \mathbb{N}$ such that 

\begin{align*}
(x^{\hat{\sigma}_1}y^{\hat{\sigma}_2})^{k_1} m_{p-u_1} &= \frac{ f_3}{\prod_{(\sigma_2\setminus \sigma_1)\cap A_+}y_\rho^{b_\rho}\prod_{(\sigma_1\setminus \sigma_2)\cap A_-}x_\rho^{b_\rho}} \end{align*}

for some $b_\rho \in \N$ and some monomial $f_3\in S$, so that there exists $\ell\in \mathbb{N}$ such that $m_{p-u_1}m_{-\ell u_2} = m_{p-u_1-\ell u_2}$ since $\ell u_2\in L$ gives 

\begin{align*}
(x^{\hat{\sigma}_1}y^{\hat{\sigma}_2})^k m_{p-u_1-\ell u_2} &= f_4 \end{align*} for some monomial $f_4 = x^\alpha y^\beta\in S$, for some $\alpha,\beta\in \mathbb{N}^{|\Sigma(1)|}$. 
\end{proof}

\begin{proposition}\label{prop: main}
Assume that $\epsilon$ determines a smooth toric GIT phase $\Sigma_\epsilon$. For the map $f$ from Equation~\ref{eqn: mapf}, the cokernel of $M_{\Lambda(L)}\hookrightarrow\operatorname{Im}(f)$ is torsion with respect to $I_{irr}(\Sigma_\epsilon\times\Sigma_\epsilon)$.
\end{proposition} 

\begin{proof}
Let $p$ be a vertex in $\mathcal{H}_L^\epsilon(0)$. Lemma~\ref{lemma: poly} and Lemma~\ref{lemma: l1} imply that for any choice of $\sigma_1,\sigma_2\in\Sigma_\epsilon(n)$ there exists $k\in\mathbb{N}$ with $(x^{\hat{\sigma}_1}y^{\hat{\sigma}_2})^k\cdot m_p\in M_{\Lambda(L)}$. In particular, every class in $\mathrm{coker}(M_{\Lambda(L)}\hookrightarrow \mathrm{Im}(f))$ is annihilated by a power of the irrelevant ideal $I_{irr}(\Sigma_\epsilon\times\Sigma_\epsilon)$, proving the claim.

For $\epsilon=0$, we use the original fan $\Sigma$ and repeat the argument with $\sigma_1,\sigma_2\in\Sigma(n)$; the cokernel again becomes zero after saturating by $I_{irr}(\Sigma\times\Sigma)$. However, this does \emph{not} by itself imply that $(\mathcal{F}^\bullet_{\mathcal{H}_L/L},\partial)$ resolves the diagonal: one must also control the kernel of the augmentation in homological degree~$0$ and the higher homology modules. As noted in Remark~\ref{rem:counterexample}, additional hypotheses are needed in general. Under the central symmetry hypothesis we complete this step in Section~\ref{sec:torsion}.
\end{proof}

\section{Example: Resolving the diagonal via deformation for $\mathrm{Bl}_p(\PP^2)$ with the deformed complex $\mathcal{H}_L^\epsilon$ }
\label{sec: deformed}

A lift $a\in\R^4$ changes the affine slice used to cut the periodic arrangement, and we write $\epsilon$ for its class in $\mathrm{Cl}(X_\Sigma)_\R$. As $a$ varies, it produces a family of labeled cellular complexes; whenever the corresponding sheafified complexes are exact, they give a family of resolutions of $\mathcal{O}_\Delta$. A circuit-generic perturbation makes the intersections at the vertices of $\mathcal{H}_L^\epsilon$ transverse. We conjecture that this parameter reflects variation of the K\"{a}hler parameter on the mirror side. As an illustrative calculation, we deform $\mathcal{H}_L$ for the smooth Fano and unimodular surface $\mathrm{Bl}_{p}\PP^2$. Let $X_\Sigma$ denote the toric variety associated to the fan with rays $\{(1,0),(1,1),(0,1),(-1,-1)\}$ in $N_\R$. For this fan, the principal-divisor lattice is $L=\operatorname{Im}(B)$ in 


\[ 0 \rightarrow M \stackrel{B}{\longrightarrow} \Z^4 \stackrel{\pi}{\rightarrow}  \mathrm{Cl}(X_\Sigma) \rightarrow 0 \] 

where $B$ is given by $\left[\begin{matrix} 1 & 0 \\ 1 & 1 \\ 0 & 1 \\ -1 & -1 \end{matrix}\right],$ is unimodular. The nef cone in Figure~\hyperref[fig: Bl0p2.nefcone]{3} is given in blue. 

\FloatBarrier
\begin{figure}[h]
\centering
\begin{tikzpicture}[
  scale=0.92,
  transform shape,
  line cap=round,
  line join=round,
  >=stealth,
  every node/.style={font=\small}
]

  \coordinate (O) at (0,0);

  \begin{scope}
    \clip (-0.05,-0.05) rectangle (8.15,6.55);

    %
    \path[
      fill=cyan!70!blue,
      fill opacity=0.34
    ]
      (O)
      -- (90:20)
      arc[
        start angle=90,
        end angle=59,
        radius=20
      ]
      -- cycle;

    %
    \path[
      fill=cyan!70!blue,
      fill opacity=0.34
    ]
      (O)
      -- (55:20)
      arc[
        start angle=55,
        end angle=31,
        radius=20
      ]
      -- cycle;
  \end{scope}

  \draw[->,semithick]
    (O) -- (0:8.15);

  \draw[->,semithick]
    (O) -- (31:7.65);

  \draw[->,semithick]
    (O) -- (90:6.25);

  \draw[->,semithick]
    (0.10,0.16) -- (57:7.55);

  %
  \foreach \r in {0.82,1.22}{

    \draw[thin]
      (3:\r)
      arc[
        start angle=3,
        end angle=28,
        radius=\r
      ];

    \draw[thin]
      (34:\r)
      arc[
        start angle=34,
        end angle=54,
        radius=\r
      ];

    \draw[thin]
      (60:\r)
      arc[
        start angle=60,
        end angle=87,
        radius=\r
      ];
  }

  \fill (O) circle (2.2pt);

  \node[anchor=east] at (-0.10,6.28)
    {$\mathcal{O}(D_1),\mathcal{O}(D_3)$};

  \node[anchor=west] at (6.58,3.88)
    {$\mathcal{O}(D_4)$};

  \node[anchor=west] at (8.18,0)
    {$\mathcal{O}(D_2)$};

  \node[anchor=south west] at (4.05,6.35)
    {$\epsilon=\mathcal{O}(D_1)+\mathcal{O}(D_4)$};

\end{tikzpicture}
\caption{The nef cone for $\mathrm{Bl}_p\PP^2$ }
 \label{fig: Bl0p2.nefcone}
\end{figure}
\FloatBarrier

Here, 

\[ Cl(X_\Sigma) \cong \faktor{\left<D_1, \dots, D_4\right>}{(D_1+D_2-D_4, D_2+D_3-D_4)}\]

Since $D_2$ corresponds to the exceptional divisor in the blow-up, $D_2$ has self-intersection number -1: \[ D_2\cdot D_2=-1\] while $D_1, D_3$, and $D_4$ have self-intersection number $1$, so the nef cone of $X_\Sigma$ is given by the span of $D_1$ and $D_4$. This description of the nef cone comes from a map

\[ (\R^2)^\vee \stackrel{\left[\begin{matrix} 0 & 1 & 0 & 1 \\ 1 & 0 & 1 & 1 \end{matrix}\right]}{\longleftarrow} \R^4 \]

corresponding to a fan in the first orthant of $\R^4$. Fix $0<t<\frac12$ and choose the lift
\[
a=t\left[\begin{matrix}1\\0\\0\\1\end{matrix}\right]\in\R^4,
\]
whose class is $\epsilon=t([D_1]+[D_4])$ in the relative interior of the nef chamber. The chamber condition records the chosen GIT phase; circuit-genericity will be checked separately below. 

\subsubsection{Deforming the cellular complex $\mathcal{H}_L$}

$\R L\cap \mathcal{H}_L$ is given in Figure~\ref{fig:RLHL} for $\mathrm{Bl}_p\PP^2$, which is unimodular (hence no extra vertices appear). Here, vertical lines in yellow correspond to integer values of $x_1$, and $x_1$ increases in value as we move to the right in the diagram. Horizontal lines in black correspond to integer values of $x_3$, which increase as we move up. Diagonal lines in blue give integer values in $x_2$ and $x_4$, which increase up and to the right, and down and to the left, respectively.

Since more than two hyperplanes intersect in $\R L$ at each point, the cellular complex $\mathcal{H}_L$ does not have transverse intersections at each vertex. 

\FloatBarrier
\begin{figure}[h]
\centering
\begin{tikzpicture}[
scale=0.8,
  x=0.85cm,
  y=0.85cm,
  line cap=round,
  line join=round,
  >=stealth
]

  \foreach \xa/\ya/\xb/\yb in {
    -1/1/1/-1,
    -1/3.4/3.4/-1,
    -1/5.8/5.8/-1,
    -1/8.2/8.2/-1,
    1.4/8.2/8.2/1.4,
    3.8/8.2/8.2/3.8,
    6.2/8.2/8.2/6.2
  }{
    \draw[
      blue!85,
      <->,
      line width=0.65pt
    ]
      (\xa,\ya) -- (\xb,\yb);
  }

  \foreach \xcoord in {0,2.4,4.8,7.2}{
    \draw[
      yellow!85!orange,
      <->,
      line width=0.65pt
    ]
      (\xcoord,-1) -- (\xcoord,8.2);
  }

  \foreach \ycoord in {0,2.4,4.8,7.2}{
    \draw[
      black,
      <->,
      line width=0.65pt
    ]
      (-1,\ycoord) -- (8.2,\ycoord);
  }

  \draw[
    yellow!85!orange,
    ->,
    line width=0.8pt
  ]
    (2.45,7.46) -- (2.98,7.46);

  \draw[
    black,
    ->,
    line width=0.8pt
  ]
    (2.88,4.90) -- (2.88,5.44);

  \draw[
    blue!85,
    ->,
    line width=0.8pt
  ]
    (3.22,6.08) -- (2.82,5.68);

  \draw[
    blue!85,
    ->,
    line width=0.8pt
  ]
    (3.58,6.12) -- (3.98,6.52);

\end{tikzpicture}
\caption{$\R L$, showing $\R L \cap \mathcal{H}_L \subset \R^4$}
\label{fig:RLHL}
\end{figure}
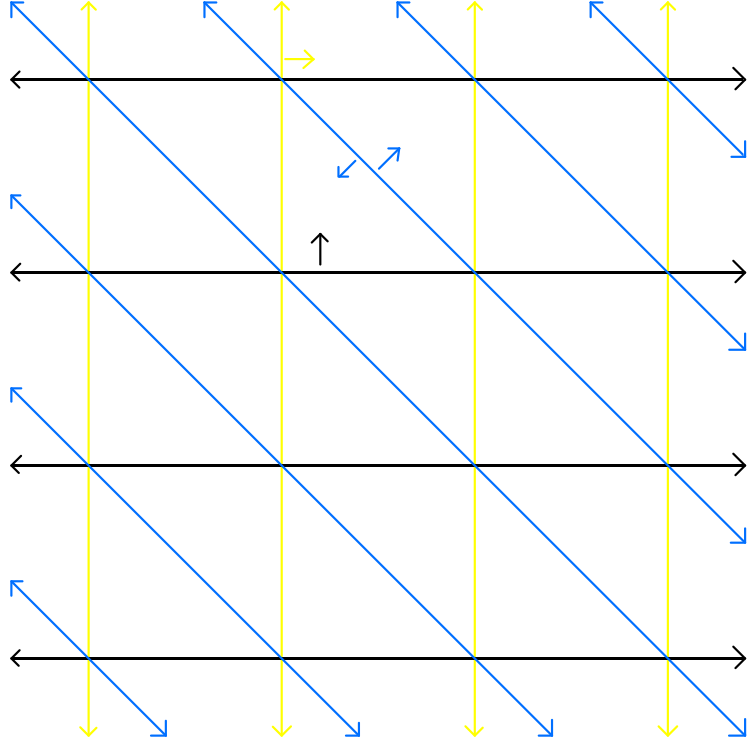

To remedy the lack of transverse intersection at each vertex $v\in\mathcal{H}_L(0)$, we translate the slice by the lift $a$ above. The translated affine plane is parameterized by
\[
p_t(x,y)=
x\left[\begin{matrix}1\\1\\0\\-1\end{matrix}\right]
+y\left[\begin{matrix}0\\1\\1\\-1\end{matrix}\right]
+\left[\begin{matrix}t\\0\\0\\t\end{matrix}\right].
\]
The coordinate restrictions are $\lambda_1=x$, $\lambda_2=x+y$, $\lambda_3=y$, and $\lambda_4=-x-y$. The primitive circuits of this configuration may be represented by
\[
c_1=(1,-1,1,0),\qquad
c_2=(0,1,0,1),\qquad
c_3=(1,0,1,1).
\]
Their pairings with $a$ are $t,t,$ and $2t$, respectively. Thus $0<t<\frac12$ makes the lift circuit-generic, independently of the fact that its class lies in the nef chamber. The resulting arrangement $\mathcal{H}_L^\epsilon$ has shifts in the first and fourth coordinate hyperplane families by $t$. We construct the cellular complex $(\mathcal{F}^\bullet_{\mathcal{H}_L^\epsilon},\partial^\epsilon)$ below. The deformed complex $\mathcal{H}_L^\epsilon$ is given in Figure~\ref{fig:HLepsilon}, together with the monomial labelings obtained by applying the floor function coordinate-wise to each vertex in $\mathcal{H}_L^\epsilon\subset\R^4$. Vertices with a common color carry the same monomial label in Figure~\ref{fig:HLepsilon}. The arrows off the hyperplanes indicate the direction in which $x_i$ increases for $1\leq i\leq4$, as in Figure~\ref{fig:RLHL}. 

\FloatBarrier
\begin{figure}
\definecolor{hleC00}{RGB}{0,115,59}
\definecolor{hleC01}{RGB}{250,157,0}
\definecolor{hleC02}{RGB}{255,0,212}
\definecolor{hleC03}{RGB}{241,196,15}
\definecolor{hleC04}{RGB}{207,54,108}
\definecolor{hleC05}{RGB}{150,150,150}
\definecolor{hleC06}{RGB}{148,102,53}
\definecolor{hleC07}{RGB}{81,92,93}
\definecolor{hleC08}{RGB}{164,0,199}
\definecolor{hleC09}{RGB}{20,199,222}
\definecolor{hleC10}{RGB}{169,255,23}
\definecolor{hleC11}{RGB}{18,3,119}
\definecolor{hleC12}{RGB}{237,54,36}
\definecolor{hleC13}{RGB}{96,186,70}
\definecolor{hleC14}{RGB}{98,45,144}
\definecolor{hleC15}{RGB}{0,111,255}
\definecolor{hleC16}{RGB}{23,73,179}
\definecolor{hleC17}{RGB}{0,0,0}
\definecolor{hleC18}{RGB}{226,106,106}
\definecolor{hleC19}{RGB}{181,21,21}
\definecolor{hleC20}{RGB}{203,103,14}
\definecolor{hleC21}{RGB}{47,218,119}
\definecolor{hleC22}{RGB}{25,176,146}
\definecolor{hleC23}{RGB}{157,187,216}
\definecolor{hleC24}{RGB}{52,152,219}
\definecolor{hleC25}{RGB}{68,79,173}
\definecolor{hleC26}{RGB}{199,195,189}
\definecolor{hleC27}{RGB}{176,156,255}
\definecolor{hleC28}{RGB}{140,84,208}
\definecolor{hleC29}{RGB}{208,134,0}
\definecolor{hleC30}{RGB}{244,243,10}
\definecolor{hleC31}{RGB}{165,204,17}

\newcommand{\HLEcluster}[4]{%
  \fill[#4,fill opacity=.36]
    (#1,{#2+.2}) circle (4.2pt);
  \fill[#4,fill opacity=.36]
    (#1,{#2+.4}) circle (4.2pt);
  \fill[#4,fill opacity=.36]
    ({#1+.2},#2) circle (4.2pt);
  \fill[#4,fill opacity=.36]
    ({#1+.4},#2) circle (4.2pt);
  \fill[#3,fill opacity=.36]
    (#1,#2) circle (4.2pt);
}

\centering
\begin{tikzpicture}[
scale = 1.2,
  x=1.55cm,
  y=1.55cm,
  line cap=round,
  line join=round,
  >=stealth,
  every node/.style={font=\scriptsize},
  monlabel/.style={
    draw=gray!45,
    rounded corners=1.5pt,
    fill=white,
    fill opacity=.96,
    text opacity=1,
    inner sep=2.5pt,
    align=center
  },
  callout/.style={
    ->,
    gray!75,
    thin
  },
  increase/.style={
    ->,
    line width=.65pt,
    shorten <=-1.2pt
  }
]

  \foreach \x in {0,1,2,3}{
    \draw[
      yellow!85!orange,
      <->,
      line width=.55pt
    ]
      (\x,-.32) -- (\x,3.55);
  }

  \foreach \y in {0,1,2,3}{
    \draw[
      black,
      <->,
      line width=.55pt
    ]
      (-.32,\y) -- (3.55,\y);
  }

  \foreach \a/\b/\c/\d in {
    -.25/.45/.45/-.25,
    -.25/1.45/1.45/-.25,
    -.25/2.45/2.45/-.25,
    -.25/3.45/3.45/-.25,
    .75/3.45/3.45/.75,
    1.75/3.45/3.45/1.75,
    2.75/3.45/3.45/2.75
  }{
    \draw[
      black,
      <->,
      line width=.55pt
    ]
      (\a,\b) -- (\c,\d);
  }

  \foreach \a/\b/\c/\d in {
    -.25/.65/.65/-.25,
    -.25/1.65/1.65/-.25,
    -.25/2.65/2.65/-.25,
    -.25/3.65/3.65/-.25,
    .75/3.65/3.65/.75,
    1.75/3.65/3.65/1.75,
    2.75/3.65/3.65/2.75
  }{
    \draw[
      blue!85,
      <->,
      line width=.55pt
    ]
      (\a,\b) -- (\c,\d);
  }

  %

  \draw[
    yellow!85!orange,
    increase
  ]
    (1.00,2.72) -- (1.34,2.72);

  \draw[
    black,
    increase
  ]
    (1.78,2.00) -- (1.78,2.34);

  \draw[
    black,
    increase
  ]
    (1.48,2.72) -- (1.73,2.97);

  \draw[
    blue!85,
    increase
  ]
    (1.88,2.52) -- (1.63,2.27);

  \HLEcluster{0}{0}{hleC19}{hleC20}
  \HLEcluster{1}{0}{hleC17}{hleC16}
  \HLEcluster{2}{0}{hleC09}{hleC08}
  \HLEcluster{3}{0}{hleC14}{hleC15}

  \HLEcluster{0}{1}{hleC22}{hleC23}
  \HLEcluster{1}{1}{hleC05}{hleC04}
  \HLEcluster{2}{1}{hleC00}{hleC01}
  \HLEcluster{3}{1}{hleC02}{hleC03}

  \HLEcluster{0}{2}{hleC21}{hleC18}
  \HLEcluster{1}{2}{hleC10}{hleC11}
  \HLEcluster{2}{2}{hleC06}{hleC07}
  \HLEcluster{3}{2}{hleC12}{hleC13}

  \HLEcluster{0}{3}{hleC24}{hleC25}
  \HLEcluster{1}{3}{hleC26}{hleC27}
  \HLEcluster{2}{3}{hleC28}{hleC29}
  \HLEcluster{3}{3}{hleC30}{hleC31}


  \node[
    monlabel,
    draw=hleC07,
    anchor=west
  ] (L01off) at (4.22,3.28)
    {$\displaystyle
      \frac{x_2x_3y_4}{x_4y_2y_3}$};

  \draw[
    ->,
    hleC07,
    thin,
    shorten >=5.4pt
  ]
    (L01off.west) -- (2.00,2.40);

  \node[
    monlabel,
    draw=hleC06,
    anchor=west
  ] (L01grid) at (4.22,2.38)
    {$\displaystyle
      \frac{x_3y_4}{x_4y_3}$};

  \draw[
    ->,
    hleC06,
    thin,
    shorten >=5.4pt
  ]
    (L01grid.west) -- (2.00,2.00);

  \node[
    monlabel,
    anchor=east
  ] (Lm10off) at (-1.05,1.85)
    {$\displaystyle
      \frac{x_4y_1y_2}{x_1x_2y_4}$};

  \draw[callout]
    (Lm10off.east) -- (1.20,1.00);

  \node[
    monlabel,
    anchor=east
  ] (Lm10grid) at (-1.05,1.15)
    {$\displaystyle
      \frac{x_4y_1y_2^2}{x_1x_2^2y_4}$};

  \draw[callout]
    (Lm10grid.east) -- (1.00,1.00);

  \node[
    monlabel,
    anchor=west
  ] (L10off) at (4.10,1.85)
    {$\displaystyle
      \frac{x_1x_2y_4}{x_4y_1y_2}$};

  \draw[callout]
    (L10off.west) -- (3.40,1.00);

  \node[
    monlabel,
    anchor=west
  ] (L10grid) at (4.10,1.15)
    {$\displaystyle
      \frac{x_1y_4}{x_4y_1}$};

  \draw[callout]
    (L10grid.west) -- (3.00,1.00);

  \node[
    monlabel,
    anchor=north
  ] (L0m1grid) at (1.15,-.78)
    {$\displaystyle
      \frac{x_4y_2^2y_3}{x_2^2x_3y_4}$};

  \draw[callout]
    (L0m1grid.north) -- (2.00,0.00);

  \node[
    monlabel,
    anchor=north
  ] (L0m1off) at (3.20,-.78)
    {$\displaystyle
      \frac{x_4y_2y_3}{x_2x_3y_4}$};

  \draw[callout]
    (L0m1off.north) -- (2.40,0.00);

\end{tikzpicture}
\caption{$\mathcal{H}_L^\epsilon$ for $\mathrm{Bl}_p\PP^2$, deformed in the $D_1+D_4$ direction}
\label{fig:HLepsilon}
\end{figure}
\FloatBarrier

Here, we emphasize that $\R L$ is a plane in $\R^4$ along which we are considering the intersections with the hyperplane arrangement $\mathcal{H}_L = \{ (x_1,x_2,x_3,x_4) \text{ }|\text{ } \exists\text{ } 1\leq i \leq 4 \text{ with } x_i \in \Z \}$. Taking the quotient of $\mathcal{H}_L^\epsilon$ by $L$ and giving monomial labels from $M_{\Lambda(L)}$ given by the integer floor function in each component are given in Figure~\ref{fig:HLeQuotL}. Here, the differential 

\[ \partial^\epsilon_1: \bigoplus_{e \in \mathcal{H}_L^\epsilon(1)}S \rightarrow \bigoplus_{v\in \mathcal{H}_L^\epsilon(0)} S \] is given by 

\[\begin{array}{c|cccccccccc|}
v_4 & 0 & 0 & 0 & 0 & 1 & -1 & 0 & x_3y_4 & 0 & -x_3y_1\\ v_3 & 0 & 1 & -y_2 & 0 & 0 & 1 & 0 & 0 & -x_3y_1 & 0 \\ 
v_2 & 0 & 0 & 0 & 1 & -1 & 0 & -x_1y_4 & 0 & 0 & x_1y_3\\
v_1 & y_2 & -1 & 0 & -1 & 0 & 0 & 0 & 0 & x_1y_3 & 0 \\
v_0 & -x_2 & 0 & x_2 & 0 & 0 & 0 & x_4y_1 & -x_4y_3 & 0 & 0 \\
\hline
 & E_0 & E_1 & E_2 & E_3 & E_4 & E_5 & E_6 & E_7 & E_8 & E_9 \end{array} \]

 subject to the labeling in Figure~\ref{fig:HLeQuotL}. The color labeling of vertices and edges in Figure~\ref{fig:HLeQuotL} corresponds to the color labeling in Figure~\ref{fig:HLepsilon}. Exactness away from homological index $i=0$ follows as before for the unimodular case by the same convexity argument used in Theorem 3.1 in Bayer-Popescu-Sturmfels \cite{bayer-popescu-sturmfels}. We investigate the cokernel of $\partial^\epsilon_0$ in the following section.

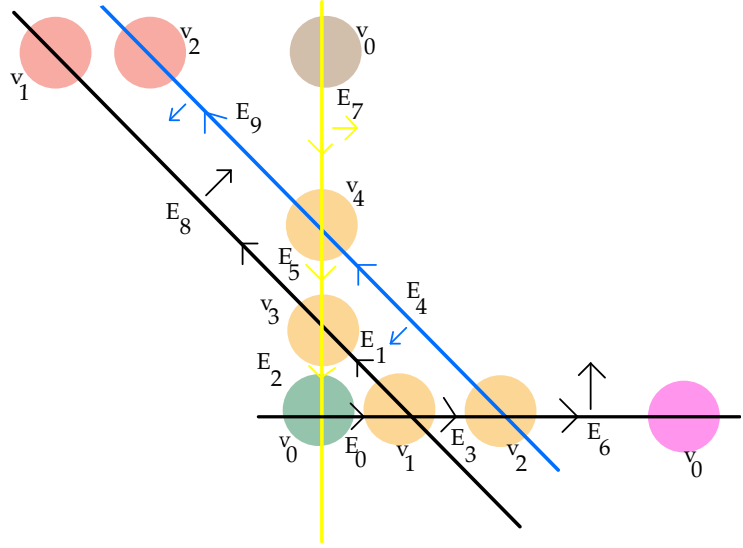
\begin{figure}[h]
\definecolor{hleQOrange}{RGB}{250,157,0}
\definecolor{hleQGreen}{RGB}{0,115,59}
\definecolor{hleQBrown}{RGB}{148,102,53}
\definecolor{hleQRed}{RGB}{237,54,36}
\definecolor{hleQMagenta}{RGB}{255,0,212}

\centering
\begin{tikzpicture}[
  x=1.10cm,
  y=1.10cm,
  line cap=round,
  line join=round,
  >=stealth,
  every node/.style={font=\small},
  edgearrow/.style={
    ->,
    line width=.65pt
  },
  increase/.style={
    ->,
    line width=.75pt,
    shorten <=-1pt
  },
  edgelabel/.style={
    fill=white,
    fill opacity=.92,
    text opacity=1,
    inner sep=1pt
  }
]

  \coordinate (v0g) at (0,0);
  \coordinate (v1b) at (1,0);
  \coordinate (v2b) at (2.25,0);
  \coordinate (v0r) at (4.55,0);
  \coordinate (v3)  at (0,1);
  \coordinate (v4)  at (0,2.25);
  \coordinate (v0t) at (0,4.50);
  \coordinate (v2t) at (-2.25,4.50);
  \coordinate (v1t) at (-3.50,4.50);

  \draw[
    black,
    line width=.8pt
  ]
    (-3.85,4.85) -- (1.75,-.75);

  \draw[
    blue!85,
    line width=.8pt
  ]
    (-2.60,4.85) -- (2.95,-.70);

  \draw[
    yellow!90!orange,
    line width=.8pt
  ]
    (0,-.85) -- (0,5.05);

  \draw[
    black,
    line width=.8pt
  ]
    (-.75,0) -- (5.15,0);

  \draw[edgearrow]
    (.23,0) -- (.58,0);

  \draw[edgearrow]
    (1.38,0) -- (1.73,0);

  \draw[edgearrow]
    (3.00,0) -- (3.35,0);

  \draw[
    yellow!90!orange,
    edgearrow
  ]
    (0,.70) -- (0,.35);

  \draw[
    yellow!90!orange,
    edgearrow
  ]
    (0,1.78) -- (0,1.43);

  \draw[
    yellow!90!orange,
    edgearrow
  ]
    (0,3.55) -- (0,3.20);

  \draw[edgearrow]
    (.72,.28) -- (.47,.53);

  \draw[edgearrow]
    (-1.35,2.35) -- (-1.60,2.60);

  \draw[
    blue!85,
    edgearrow
  ]
    (1.18,1.07) -- (.93,1.32);

  \draw[
    blue!85,
    edgearrow
  ]
    (-1.10,3.35) -- (-1.35,3.60);


  \draw[increase]
    (-1.55,2.55) -- (-1.20,2.90);

  \draw[
    blue!85,
    increase
  ]
    (-1.92,4.17) -- (-2.27,3.82);

  \draw[
    yellow!90!orange,
    increase
  ]
    (0,3.72) -- (.42,3.72);

  \draw[increase]
    (3.45,0) -- (3.45,.46);

  \fill[
    hleQRed,
    fill opacity=.42
  ]
    (v1t) circle (9.5pt);

  \fill[
    hleQRed,
    fill opacity=.42
  ]
    (v2t) circle (9.5pt);

  \fill[
    hleQBrown,
    fill opacity=.42
  ]
    (v0t) circle (9.5pt);

  \fill[
    hleQOrange,
    fill opacity=.42
  ]
    (v4) circle (9.5pt);

  \fill[
    hleQOrange,
    fill opacity=.42
  ]
    (v3) circle (9.5pt);

  \fill[
    hleQGreen,
    fill opacity=.42
  ]
    (v0g) circle (9.5pt);

  \fill[
    hleQOrange,
    fill opacity=.42
  ]
    (v1b) circle (9.5pt);

  \fill[
    hleQOrange,
    fill opacity=.42
  ]
    (v2b) circle (9.5pt);

  \fill[
    hleQMagenta,
    fill opacity=.42
  ]
    (v0r) circle (9.5pt);

  \node[
    anchor=north east
  ] at (-3.72,4.25)
    {$v_1$};

  \node[
    anchor=south west
  ] at (-2.12,4.55)
    {$v_2$};

  \node[
    anchor=south west
  ] at (.28,4.42)
    {$v_0$};

  \node[
    anchor=south west
  ] at (.30,2.28)
    {$v_4$};

  \node[
    anchor=north east
  ] at (-.30,1.18)
    {$v_3$};

  \node[
    anchor=north east
  ] at (-.28,-.22)
    {$v_0$};

  \node[
    anchor=north
  ] at (1,-.34)
    {$v_1$};

  \node[
    anchor=north
  ] at (2.25,-.34)
    {$v_2$};

  \node[
    anchor=north
  ] at (4.55,-.34)
    {$v_0$};

  \node[
    edgelabel,
    anchor=north
  ] at (.38,-.28)
    {$E_0$};

  \node[
    edgelabel,
    anchor=north
  ] at (1.58,-.28)
    {$E_3$};

  \node[
    edgelabel,
    anchor=north
  ] at (3.45,-.28)
    {$E_6$};

  \node[
    edgelabel,
    anchor=east
  ] at (-.28,.48)
    {$E_2$};

  \node[
    edgelabel,
    anchor=east
  ] at (-.30,1.58)
    {$E_5$};

  \node[
    edgelabel,
    anchor=west
  ] at (.28,3.70)
    {$E_7$};

  \node[
    edgelabel,
    anchor=west
  ] at (.73,.62)
    {$E_1$};

  \node[
    edgelabel,
    anchor=east
  ] at (-1.62,2.62)
    {$E_8$};

  \node[
    edgelabel,
    anchor=west
  ] at (1.22,1.35)
    {$E_4$};

  \node[
    edgelabel,
    anchor=west
  ] at (-1.28,3.68)
    {$E_9$};

\end{tikzpicture}
\caption{The quotient cellular complex $\faktor{\mathcal{H}_L^\epsilon}{L}$ with monomial labelings from $M_{\Lambda(L)}$}
\label{fig:HLeQuotL}
\end{figure}

\subsection{Cokernel of $\partial^\epsilon_0$}
\label{subsec:coker-example}

Here, we consider the map

\[ (\mathcal{F}^\bullet_{\mathcal{H}_L^\epsilon}, \partial^\epsilon) \stackrel{f}{\rightarrow} S_{\prod_i x_i y_i}\]

given by $\bigoplus_{v\in \mathcal{H}_L^\epsilon}S(-|m_v|) \rightarrow S_{\prod_i x_i y_i}$ where a vertex in $\mathcal{H}_L^\epsilon$ maps to its vertex monomial label in $M_{\Lambda(L)}$.

For $\mathrm{Bl}_p \PP^2$, we have that $L = \left< \left[\begin{matrix} 1 \\ 1 \\ 0 \\ -1\end{matrix}\right] , \left[\begin{matrix} 0 \\ 1 \\ 1 \\ -1 \end{matrix}\right]\right>$, \\

\begin{align*}
I_L &= \left< x^{v_+}-x^{v_-} \text{ }|\text{ } v\in L \right> \\
 &= \left< x_1x_2-x_4, x_2x_3-x_4, x_1-x_3\right>, \end{align*}

 \begin{align*}
J_L &= \left< x^uy^v - x^vy^u \text{ }|\text{ }u-v\in L\right>\\
&= \left<x_1x_2y_4 - x_4y_1y_2, x_2x_3y_4 - x_4y_2y_3, x_1y_3-x_3y_1\right> \end{align*}

and $M_{\Lambda(L)}$ as an $S-$submodule of $T$ has the infinite generating set $\{ x^uy^{-u}\text{ }|\text{ }u\in L \}$. In the quotient by $L$, the cellular complex $\mathcal{H}_L^\epsilon$ carries monomial labels given in Figure~\ref{fig: HLL.12.7}. When we consider monomial labels on vertices in the quotient $\mathcal{H}_L^\epsilon/L$, we only need to consider the monomial labels on vertices given in Figure~\ref{fig: HLL.12.7} due to which vertices in $\mathcal{H}_L^\epsilon(0)$ carry the same monomial label. \\

Here, $L\cap \N^n = \{0\}$ implies that $M_{\Lambda(L)}$ contains $1$, so that $M_{\Lambda(L)} \subset Im(f)$ above, by considering the action of $L$ on the quotient from Figure~\ref{fig: HLL.12.7}. Modulo the action of $L$, there is one additional monomial $\frac{y_2}{x_2}$ in $Im(f)$ which is not contained in $M_{\Lambda(L)}$. Note that since 

\[ \{ \frac{x_1x_2 y_4}{x_4y_1y_2}, \frac{x_2x_3y_4}{x_4y_2y_3}, \frac{x_1y_3}{x_3y_1} \} \subset M_{\Lambda(L)},\]

we also have

\[ \{\frac{x_4y_1y_2}{x_1x_2y_4}, \frac{x_4y_2y_3}{x_2x_3y_4}, \frac{x_3y_1}{x_1y_3} \} \subset M_{\Lambda(L)}. \] 

Now for $X_\Sigma = \mathrm{Bl}_p\PP^2$,

\begin{align*}
I_{irr} &= \left< x_3x_4, x_1x_4, x_1x_2, x_2x_3\right> \\
&= \left<x_1, x_3\right> \cap \left<x_2,x_4\right> \end{align*}

and $I_{irr}$ for $X_\Sigma \times X_\Sigma$ is given by \[ \left<x_1, x_3\right>\cap\left<x_2,x_4\right> \cap \left<y_1,y_3\right> \cap \left<y_2, y_4\right>. \] 

Now, for any monomial $q$ in $I_{irr}$ for $X_\Sigma \times X_\Sigma$, we must have that either $x_2$ or $x_4$ divides $q$. If $x_2$ divides $q$, then

\[ x_2 \left( \frac{y_2}{x_2} \right) = y_2 \in M_{\Lambda(L)} \] 

since $M_{\Lambda(L)}$ contains $1$ shows that $q \cdot \frac{y_2}{x_2} \in M_{\Lambda(L)}$. If $x_4 | q$, then 

\begin{align*}
y_3x_4 \cdot \left( \frac{y_2}{x_2}\right) &= \left(\frac{x_4y_2y_3}{x_2x_3y_4}\right) \cdot x_3y_4 \in M_{\Lambda(L)} \end{align*}

shows that $q \cdot \left(\frac{y_2}{x_2}\right)$ is in the image of the action of $S$ on $q\cdot \left( \frac{y_2}{x_2}\right)$. Therefore, the cokernel of the inclusion of $M_{\Lambda(L)} \hookrightarrow Im(f)$ is torsion with respect to $I_{irr}$ for $X_\Sigma \times X_\Sigma$. While an $R-$module $M$ is $I$-torsion if and only if there exists $k\in \Z_+$ such that $I^k M = 0$, here $k=1$ suffices to show that the cokernel of the inclusion of $M_{\Lambda(L)} \hookrightarrow Im(f)$ is torsion with respect to $I_{irr}$ for $X_\Sigma \times X_\Sigma$.

\FloatBarrier
\begin{figure}[h]
\begin{tikzpicture}[
  x=1.12cm,
  y=1.12cm,
  line cap=round,
  line join=round,
  >=stealth,
  every node/.style={font=\small},
  hyperplane/.style={
    line width=.7pt
  },
  increase/.style={
    ->,
    line width=.85pt,
    shorten <=-1.1pt
  }
]

  \draw[
    hyperplane,
    black
  ]
    (-3.55,0) -- (5.25,0);

  \draw[
    hyperplane,
    yellow!85!orange
  ]
    (0,-2.10) -- (0,3.95);

  \draw[
    hyperplane,
    black
  ]
    (-2.60,3.60) -- (3.80,-2.80);

  \draw[
    hyperplane,
    blue!85
  ]
    (-1.85,3.85) -- (4.05,-2.05);


  \draw[
    increase,
    yellow!85!orange
  ]
    (0,3.62) -- (.48,3.62);

  \draw[
    increase,
    black
  ]
    (4.15,0) -- (4.15,.58);

  \draw[
    increase,
    black
  ]
    (2.05,-1.05) -- (2.42,-.68);

  \draw[
    increase,
    blue!85
  ]
    (3.55,-1.55) -- (3.18,-1.92);

  \fill[
    orange!70,
    fill opacity=.48
  ]
    (0,2) circle (10pt);

  \node[
    anchor=west
  ] at (.20,2.20)
    {$1$};

  \fill[
    orange!70,
    fill opacity=.48
  ]
    (0,1) circle (10pt);

  \node[
    anchor=east
  ] at (-.20,.78)
    {$1$};

  \fill[
    green!55!black,
    fill opacity=.38
  ]
    (0,0) circle (10pt);

  \node[
    anchor=north east
  ] at (-.25,-.20)
    {$\displaystyle\frac{y_2}{x_2}$};

  \fill[
    orange!70,
    fill opacity=.48
  ]
    (1,0) circle (10pt);

  \node[
    anchor=north
  ] at (1,-.25)
    {$1$};

  \fill[
    orange!70,
    fill opacity=.48
  ]
    (2,0) circle (10pt);

  \node[
    anchor=south
  ] at (2,.25)
    {$1$};

  \node[
    anchor=west
  ] at (.12,3.30)
    {$x_1=0$};

  \node[
    anchor=south west
  ] at (4.38,.10)
    {$x_3=0$};

  \node[
    anchor=north east
  ] at (1.72,-1.28)
    {$x_2=0$};

  \node[
    anchor=north west
  ] at (2.35,-.60)
    {$x_4=0$};

\end{tikzpicture}
\caption{Monomial labels on vertices $v\in \mathcal{H}_L^\epsilon(0)$ modulo $L$}
\label{fig: HLL.12.7}
\end{figure}
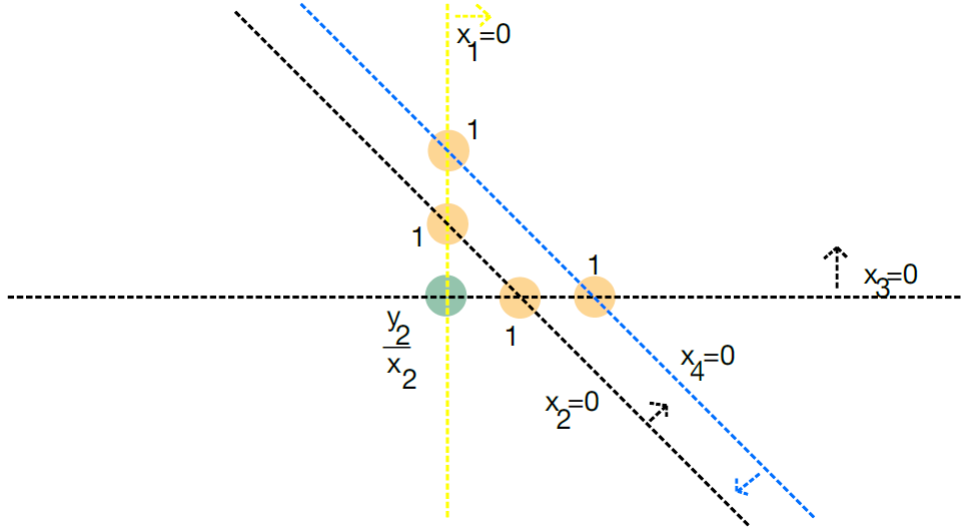
\FloatBarrier

\subsection{Changing the secondary-fan chamber}

Here we consider $\text{coker}(M_{\Lambda(L)}\hookrightarrow\operatorname{Im}(f))$ in the other chamber of the effective cone, spanned by $\mathcal{O}(D_2)$ and $\mathcal{O}(D_4)$. Fix $0<t<\frac12$, take the class $\epsilon=t([D_2]+[D_4])$ in the relative interior of this chamber, and choose the lift
\[
a=t(0,1,0,1)\in\R^4.
\]
For the circuits $c_1,c_2,c_3$ listed above, the pairings with this lift are $-t,2t,$ and $t$, respectively, so $a$ is circuit-generic. Thus the chamber condition selects the new GIT phase, while the separate circuit condition guarantees transversality. (Again, $D_2$ corresponds to the exceptional divisor of the blow-up of $\mathrm{Bl}_p\PP^2$.) Our previous discussion of $\text{coker}(M_{\Lambda(L)}\hookrightarrow\operatorname{Im}(f))$ from Section~\ref{subsec:coker-example} took place in the nef chamber of $N^1(X_\Sigma)$, with $N^1(X_\Sigma)\cong\mathrm{Cl}(X_\Sigma)\otimes_\Z\R$ since $X_\Sigma$ is smooth. We now study the same cokernel in the other chamber shown in Figure~\ref{fig:effectivecone}.

\FloatBarrier
\begin{figure}[h]
\definecolor{effectiveGreen}{RGB}{29,177,71}
\centering
\begin{tikzpicture}[
  x=1cm,
  y=1cm,
  line cap=round,
  line join=round,
  >=stealth,
  every node/.style={font=\large}
]

  \coordinate (O) at (0,0);

  %
  \begin{scope}
    \clip (0,0) rectangle (5.95,5.72);

    \fill[
      effectiveGreen,
      fill opacity=.90
    ]
      (O) -- (20,0) -- (20,19.25) -- cycle;
  \end{scope}

  \draw[
    ->,
    line width=.85pt
  ]
    (O) -- (7.55,0);

  \draw[
    ->,
    line width=.85pt
  ]
    (O) -- (0,6.38);

  \draw[
    ->,
    line width=.85pt
  ]
    (O) -- (6.28,6.04);

  \fill (O) circle (2.8pt);

  \node[
    anchor=south east
  ] at (-.10,6.35)
    {$\mathcal{O}(D_1),\mathcal{O}(D_3)$};

  \node[
    anchor=south west
  ] at (6.06,6.18)
    {$\mathcal{O}(D_4)$};

  \node[
    anchor=west
  ] at (7.62,0)
    {$\mathcal{O}(D_2)$};

  \begin{scope}[
    shift={(2.55,4.65)},
    line width=.48pt
  ]
    \draw (0,0) -- (0,.86);
    \draw (0,0) -- (.92,0);
    \draw (0,0) -- (.78,.78);
    \draw (0,0) -- (-.78,-.78);
  \end{scope}

  \begin{scope}[
    shift={(4.55,2.02)}
  ]
    \fill[white]
      (-.73,-.67) rectangle (.84,.80);

    \begin{scope}[
      line width=.48pt
    ]
      \draw (0,0) -- (0,.68);
      \draw (0,0) -- (.70,0);
      \draw (0,0) -- (-.64,-.64);
    \end{scope}
  \end{scope}

\end{tikzpicture}
\caption{Chambers of the Effective Cone}
\label{fig:effectivecone}
\end{figure}
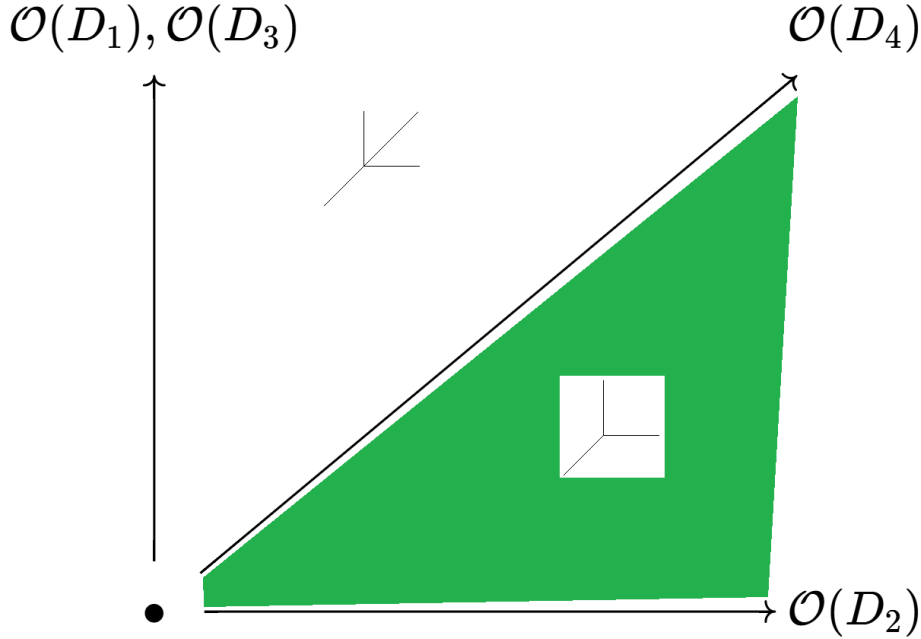
\FloatBarrier

Inside of this cone spanned by $\mathcal{O}(D_2)$ and $\mathcal{O}(D_4)$ in $\mathrm{Cl}(X_\Sigma)\otimes_\Z \R$, we have a corresponding fan for which the ray $\rho_2$ is removed. This gives the fan for $\PP^2$, though we retain the information that $\rho_2$ no longer lives in any maximal cone so that the irrelevant ideal for $X_\Sigma$ becomes

\[ I_{irr} = \left<x_1,x_3,x_4\right> \cap (x_2) \]

and the irrelevant ideal for $X_\Sigma \times X_\Sigma$ in this case is

\[ I_{irr} = \left<x_1,x_3,x_4\right> \cap (x_2)\cap \left<y_1, y_3, y_4\right> \cap (y_2). \] 

For the lift $a=t(0,1,0,1)$, we obtain the deformed cellular complex $\mathcal{H}_L^\epsilon$, whose quotient by $L$ is given in Figure~\ref{fig: def2}. Here, the second and fourth coordinate hyperplane families are shifted; they are labeled in blue and yellow, respectively, in the figure. 

\FloatBarrier
\begin{figure}[h]
\centering
\begin{tikzpicture}[
  x=1.15cm,
  y=1.15cm,
  line cap=round,
  line join=round,
  >=stealth,
  every node/.style={font=\small},
  hyperplane/.style={
    line width=.8pt
  },
  increase/.style={
    ->,
    line width=.95pt,
    shorten <=-1.1pt
  },
  hplabel/.style={
    fill=white,
    fill opacity=.88,
    text opacity=1,
    inner sep=1.5pt
  }
]

  \draw[
    hyperplane,
    black
  ]
    (0,-3.05) -- (0,3.25)
    node[
      hplabel,
      pos=.91,
      right
    ] {$x_1=0$};

  \draw[
    hyperplane,
    black
  ]
    (-3.85,0) -- (4.85,0)
    node[
      hplabel,
      pos=.92,
      above
    ] {$x_3=0$};

  \draw[
    hyperplane,
    yellow!90!black
  ]
    (-2.55,3.55) -- (4.25,-3.25)
    node[
      hplabel,
      text=yellow!50!black,
      pos=.84,
      sloped,
      above
    ] {$x_2=0$};

  \draw[
    hyperplane,
    blue!85
  ]
    (-3.75,2.75) -- (2.15,-3.15)
    node[
      hplabel,
      text=blue!85,
      pos=.17,
      sloped,
      below
    ] {$x_4=0$};


  \draw[
    increase,
    black
  ]
    (0,2.05) -- (.72,2.05);

  \draw[
    increase,
    black
  ]
    (2.35,0) -- (2.35,.82);

  \draw[
    increase,
    yellow!90!black
  ]
    (-1.78,2.78) -- (-2.18,2.38);

  \draw[
    increase,
    blue!85
  ]
    (1.02,-2.02) -- (1.42,-1.62);

  \fill[
    orange!70,
    fill opacity=.48
  ]
    (0,1) circle (10pt);

  \node[
    anchor=west
  ] at (.22,1.22)
    {$1$};

  \fill[
    orange!70,
    fill opacity=.48
  ]
    (0,0) circle (10pt);

  \node[
    anchor=north west
  ] at (.18,-.08)
    {$1$};

  \fill[
    orange!70,
    fill opacity=.48
  ]
    (1,0) circle (10pt);

  \node[
    anchor=north west
  ] at (1.18,.10)
    {$1$};

  \fill[
    green!55!black,
    fill opacity=.38
  ]
    (-1,0) circle (10pt);

  \node[
    anchor=north east
  ] at (-1.10,-.12)
    {$\displaystyle\frac{y_1}{x_1}$};

  \fill[
    green!55!black,
    fill opacity=.38
  ]
    (0,-1) circle (10pt);

  \node[
    anchor=south west
  ] at (-.52,-1.85)
    {$\displaystyle\frac{y_3}{x_3}$};

\end{tikzpicture}
\caption{Deformed complex $\mathcal{H}_L^\epsilon$ for the class $\epsilon=t([D_2]+[D_4])$}
\label{fig: def2}
\end{figure}
\FloatBarrier

In this case, $I_{irr}$ for $X_\Sigma \times X_\Sigma$ is given above and $\text{coker}(M_{\Lambda(L)}) \ni \frac{y_1}{x_1}, \frac{y_3}{x_3}$ and is spanned by translates of these elements by $L$. Note that in this case, at least one of $x_1, x_3$, or $x_4$ must divide any monomial $q$ in the irrelevant ideal $I_{irr}$ of $X_\Sigma\times X_\Sigma$. \\[.2cm]

For $L$-translates of $\frac{y_1}{x_1}$, we note that if $x_1$ divides $q$, then 

\begin{align*}
x_1 \cdot \left(\frac{y_1}{x_1} \right)=y_1 \in M_{\Lambda(L)}\end{align*} since $1\in M_{\Lambda(L)}$, and $M_{\Lambda(L)}$ is an $S-$submodule of $T= k[x_1^{\frac{+}{}}, \dots, x_n^{\frac{+}{}}, y_1^{\frac{+}{}}, \dots, y_n^{\frac{+}{}}]$. If $x_3$ divides $q$, then 

\begin{align*}
x_3 \cdot \frac{y_1}{x_1} &= y_3 \cdot \left(\frac{x_3y_1}{x_1y_3}\right) \in M_{\Lambda(L)}. \end{align*}

If $x_4$ divides $q$, then

\begin{align*}
x_4y_2 \left(\frac{y_1}{x_1}\right) &= x_2y_4\left(\frac{x_4y_1y_2}{x_1x_2y_4}\right) \in M_{\Lambda(L)}\end{align*}

so that $\overline{\frac{y_1}{x_1}} = \overline{0} \in \text{coker}(M_{\Lambda(L)}\hookrightarrow Im(f) )$.

Next, for ($L-$translates of) $\frac{y_3}{x_3}$, we again have that at least one of $x_1, x_3$, or $x_4$ divides any monomial $q$ in $I_{irr}$, the irrelevant ideal for $X_\Sigma\times X_\Sigma$. If $x_1$ divides $q$, then 

\begin{align*}
\frac{y_3}{x_3}\cdot x_1 &= \left(\frac{x_1y_3}{x_3y_1}\right)\cdot y_1 \in M_{\Lambda(L)}\end{align*}

If $x_3$ divides $q$, then 

\begin{align*} \frac{y_3}{x_3}\cdot x_3 &= y_3\in M_{\Lambda(L)} \end{align*} since $1 \in M_{\Lambda(L)}$. Lastly, if $x_4 | q$, then \begin{align*} \left(\frac{y_3}{x_3}\right)x_4y_2 &= \left(\frac{x_4y_2y_3}{x_2x_3y_4}\right)x_2y_4 \in M_{\Lambda(L)}. \end{align*}

This shows that $\overline{\frac{y_3}{x_3}} = 0 \in \text{coker}(M_{\Lambda(L)}\hookrightarrow Im(f))$. Hence,

\[ \text{coker}(M_{\Lambda(L)}\hookrightarrow Im(f)) = 0 \] 

modulo $I_{irr}$ for the GIT phase determined by the class $\epsilon=t([D_2]+[D_4])$, with the circuit-generic lift $a=t(0,1,0,1)$ and $0<t<\frac12$. 

\section{Torsion of the remaining homology and proof of Theorem~\ref{thm: MainTheorem1}}
\label{sec:torsion}

\begin{definition}\label{def:centralsym}
A complete fan $\Sigma\subset N_\R$ is \textbf{centrally symmetric} if $\Sigma(1)$ is invariant under $u\mapsto -u$ and its maximal cones come in opposite pairs.
\end{definition}

\begin{remark}\label{rem:counterexample}
The undeformed construction can fail without additional hypotheses. A counterexample can be constructed from a smooth complete toric surface with rays
\[
(1,0),(0,1),(-1,0),(-2,-1),(-1,-1),(-1,-2),(0,-1)
\]
for which the degree-zero homology of the cellular complex has an associated prime $\langle y_0,y_1\rangle$, where $y_i$ denotes the $y$-variable corresponding to the $i$-th ray in the displayed ordering; in particular, $y_0$ and $y_1$ correspond to the rays $(1,0)$ and $(0,1)$, respectively. Hence the complex does not resolve the diagonal.
\end{remark}

Proposition~\ref{prop: main} shows that the cokernel of the augmentation map in homological degree~$0$ agrees with the diagonal after saturating by the irrelevant ideal. To conclude Theorem~\ref{thm: MainTheorem1}, it remains to show that the 
\emph{other} homology of the cellular complex is also killed by a power of the irrelevant ideal. We give a criterion for this in terms of the labeled subcomplexes $X_{\leq \mathbf{b}}$ and then verify it under the central symmetry hypothesis of Definition~\ref{def:centralsym}.

\subsection{A torsion criterion via labeled subcomplexes}
Let $\widetilde X:=\mathcal{H}_L$ (for $\epsilon=0$) and $X:=\widetilde X/L$. The labeled cell complex $\widetilde X$ determines an $L$-equivariant $\Z^{2n}$-graded $S[\Lambda(L)]$-free cellular complex $(\widetilde{\mathcal{F}}^\bullet_{\widetilde X},\partial)$, whose image under the Bayer--Sturmfels equivalence \cite[Theorem~3.2]{bayer-sturmfels} is the finite $\Z^{2n}/\Lambda(L)$-graded $S$-free complex $(\mathcal{F}^\bullet_X,\partial)$ used throughout.

For a Laurent monomial $\mathbf{b}\in S_{\prod x_i y_i}$ we write $\widetilde X_{\leq \mathbf{b}}$ for the subcomplex of $\widetilde X$ consisting of those faces $F$ with label $m_F$ dividing $\mathbf{b}$ (equivalently, with multidegree $\leq \deg(\mathbf{b})$ in the coordinatewise partial order).

The cellular construction identifies multigraded homology with reduced homology of these lifted subcomplexes (see, e.g., \cite[Chapter~4]{miller-sturmfels} and \cite[Proposition~1.1]{bayer-sturmfels}): for each multidegree $\mathbf{b}$ we have
\[
H^i(\mathcal{F}^\bullet_X)_\mathbf{b}\ \cong\ \widetilde{H}_{i-1}(\widetilde X_{\leq \mathbf{b}};\Bbbk),\qquad i\ge 1,
\]
and similarly the kernel in homological degree~$0$ is controlled by $\widetilde{H}_0(\widetilde X_{\leq \mathbf{b}};\Bbbk)$.
Consequently, to show that $H^i(\mathcal{F}^\bullet_X)$ is $I_{irr}$-torsion it suffices to show that for every multidegree $\mathbf{b}$ there exists $k\gg 0$ and a monomial $m\in I_{irr}$ such that the inclusion-induced map
\[
\widetilde{H}_*(\widetilde X_{\leq \mathbf{b}};\Bbbk)\longrightarrow \widetilde{H}_*(\widetilde X_{\leq m^k\mathbf{b}};\Bbbk)
\]
is the zero map.

\subsection{A symmetric irrelevant monomial}
Assume now that $\Sigma$ is centrally symmetric in the sense of Definition~\ref{def:centralsym}, and fix a maximal cone $\sigma\in\Sigma(n)$. Since $-\sigma\in\Sigma(n)$ as well, the corresponding Cox monomials $x^{\hat\sigma}$ and $x^{\widehat{-\sigma}}$ are generators of the irrelevant ideal on $X_\Sigma$. In particular, the product $x^{\hat\sigma}x^{\widehat{-\sigma}}$ is divisible by $\prod_{\rho\in\Sigma(1)} x_\rho$ because every ray of $\Sigma(1)$ is omitted from at least one of $\sigma(1)$ and $(-\sigma)(1)$. The same holds for the $y$-variables on the second factor. Hence the monomial
\[
m_{\mathrm{sym}}\ :=\ x^{\hat\sigma}x^{\widehat{-\sigma}}\,y^{\hat\sigma}y^{\widehat{-\sigma}}
\]
lies in $I_{irr}^2\subset S$ and has positive exponent in every variable $x_\rho$ and $y_\rho$.

\subsection{Proof of Theorem~\ref{thm: MainTheorem1}}
Let $\mathbf{b}$ be any multidegree. Since $m_{\mathrm{sym}}$ has positive exponent in every variable, for $k\gg 0$ the divisibility condition $m_F\mid m_{\mathrm{sym}}^k\mathbf{b}$ imposes both upper and lower bounds on the coordinates of vertices in $\widetilde X_{\le m_{\mathrm{sym}}^k\mathbf{b}}\subset \R L$. Concretely, the set of contributing vertices is contained in an axis-parallel box in $\R^{\Sigma(1)}$; intersecting with the affine subspace $\R L$ produces a bounded convex polytope. The restriction of the hyperplane arrangement to this polytope gives a regular cell decomposition, hence $\widetilde X_{\le m_{\mathrm{sym}}^k\mathbf{b}}$ is contractible.

It follows that $\widetilde{H}_{i-1}(\widetilde X_{\le m_{\mathrm{sym}}^k\mathbf{b}};\Bbbk)=0$ for all $i\ge 1$, i.e.
\[
H^i(\mathcal{F}^\bullet_X)_{m_{\mathrm{sym}}^k\mathbf{b}}=0\qquad (i>0).
\]
Therefore multiplication by $m_{\mathrm{sym}}^k$ annihilates each multigraded piece of $H^i(\mathcal{F}^\bullet_X)$ for $i>0$. Since $m_{\mathrm{sym}}\in I_{irr}^2$, we conclude that $H^i(\mathcal{F}^\bullet_X)$ is $I_{irr}$-torsion for every $i>0$.

The same contractibility argument shows $\widetilde{H}_0(\widetilde X_{\le m_{\mathrm{sym}}^k\mathbf{b}};\Bbbk)=0$, hence the kernel of the augmentation in homological degree~$0$ is also $I_{irr}$-torsion. Combining this with Proposition~\ref{prop: main} (which controls the cokernel after saturation) shows that after sheafification the complex is exact with cokernel $\mathcal{O}_\Delta$. This proves Theorem~\ref{thm: MainTheorem1}.

\section{Smooth, non-unimodular example: Twice-Iterated Blow-up of $\PP^2$ at a point}
\justifying
Let $X_\Sigma$ be the projective toric surface obtained from $\PP^2$ by two successive torus-equivariant blow-ups, with the second center a torus-fixed point on the exceptional curve created by the first. The resulting surface is smooth, while its principal-divisor lattice is non-unimodular. Its fan $\Sigma$ is given by $\Sigma(1) = \{ e_1, e_1 + e_2, e_2, -e_1 + e_2, -e_2 \} \subset N$ which we denote by $\rho_1, \dots, \rho_5$ and \newline $\Sigma(2)= \left\{ \{ \rho_1, \rho_2\}, \{\rho_2, \rho_3\}, \{\rho_3, \rho_4\}, \{\rho_4, \rho_5\}, \{\rho_5, \rho_1\} \right\}$. 
In this section we illustrate the construction at $\epsilon=0$, i.e. $\mathcal{H}_L^\epsilon=\mathcal{H}_L$ without deformation. Proposition~\ref{prop: main} ensures that the cokernel agrees with the diagonal after saturating by the irrelevant ideal; without additional hypotheses we do not claim that the complex is exact on the nose (cf. Remark~\ref{rem:counterexample}). Here, $N \cong \Z^2$ and $X_\Sigma$ has fundamental exact sequence

\[ 0 \rightarrow \Z^2 \stackrel{B}{\rightarrow} \Z^5 \stackrel{\pi}{\rightarrow} \text{Cl}(X) \rightarrow 0 \] 

with $L = \text{Im}(B)$ a rank $2$ lattice in $\Z^5$. Here $B = \left[ \begin{matrix} 1 & 0 \\ 1 & 1 \\ 0 & 1 \\ -1 & 1 \\ 0 & -1 \end{matrix}\right]$ with columns $\vec{u}$ and $\vec{v}$, respectively. Here $\mathrm{Cl}(X_\Sigma)$ has presentation
\begin{align*}
    \mathrm{Cl}(X_\Sigma) & \cong \faktor{ \left< D_1, \dots, D_5 \right> }{(D_4 \sim D_1 + D_2, D_5 \sim D_1 + 2D_2 + D_3)}
\end{align*}
so we use the basis $\{D_1, D_2, D_3 \}$ for $\mathrm{Cl}(X_\Sigma)$. The infinite hyperplane arrangement $\mathcal{H}_L$ intersected with $\mathbb{R}L$ is given in Figure~\ref{fig: bp2HL}. A fundamental domain for $\faktor{(\mathcal{H}_L \cap \R L)}{L}$ is given in Figure~\ref{fig: bp2funddom}. Monomial labelings of faces are given in Table~\ref{table: monbp2}. This gives the complex $(\mathcal{F}^\bullet_{\mathcal{H}_L/L}, \partial)$ of free $S$-modules graded by $\mathrm{Cl}(X)$, and hence a complex of line bundles on $X_\Sigma\times X_\Sigma$. Write
\[
\begin{aligned}
\mathcal{E}_2={}&
\mathcal{O}((-1,-1,0)\times(-2,-3,-1))\\
&\oplus\mathcal{O}((-1,-2,-1)\times(-2,-2,-1))\\
&\oplus\mathcal{O}((-1,-1,0)\times(-1,-1,-1))\\
&\oplus\mathcal{O}((-1,-2,-1)\times(-1,-2,-1)),\\[0.4em]
\mathcal{E}_1={}&
\mathcal{O}((-1,-1,0)\times(-1,-1,0))\\
&\oplus\mathcal{O}((-1,-1,0)\times(-2,-2,-1))\\
&\oplus\mathcal{O}((0,-1,0)\times(-1,-2,-1))\\
&\oplus\mathcal{O}((-1,-2,-1)\times(-1,-2,-1))\\
&\oplus\mathcal{O}((0,0,0)\times(-1,-1,-1))\\
&\oplus\mathcal{O}((0,0,0)\times(-1,-1,-1)),\\[0.4em]
\mathcal{E}_0={}&
\mathcal{O}((0,0,0)\times(0,0,0))
\oplus
\mathcal{O}((1,1,1)\times(-1,-1,-1)).
\end{aligned}
\]
The complex is
\[
0\longrightarrow\mathcal{E}_2
\xrightarrow{\partial_2=d^{-2}}\mathcal{E}_1
\xrightarrow{\partial_1=d^{-1}}\mathcal{E}_0
\longrightarrow0.
\]

The boundary maps are given by

\begin{align*}
    \partial_2 &= \left( \begin{matrix} 0 & 0 & x_1x_2 & -x_5y_2 \\ 0 & x_5y_4 & -x_4 & 0 \\ 0 & y_1 & 0 & -1 \\ x_1y_4 & 0 & 0 & x_3x_4 \\ y_2 & x_2x_3 & 0 & 0 \\ y_5 & 0 & y_3 & 0 \end{matrix}\right) 
\end{align*}
and
\begin{align*}
    \partial_1 &= \left( \begin{matrix} x_4 y_1 y_2 - x_1 x_2 y_4 & -x_4 y_1 y_5 & x_2 y_5 & x_2 x_3 x_4 y_5 - x_5 y_2 y_3 y_4 & y_1 y_2 y_3 & y_3 y_4 \\ 0 & x_1 x_5 y_4 & -x_5 y_2 & 0 & -x_1 x_2 x_3 & -x_3 x_4 \end{matrix}\right)
\end{align*}
subject to the ordering of faces given in Figure~\ref{fig: bp2funddom}. In Figure~\ref{fig: bp2funddom}, there is an extra vertex because $X_\Sigma$ is not unimodular. This vertex did not appear in \cite{bayer-popescu-sturmfels}. This extra vertex is why we use the floor function in our monomial labeling on vertices. We remind the reader that the grading on monomial labelings here is given as an element of $\mathrm{Cl}(X_\Sigma)$. \\


\subsection{Implications for exceptional collections}

As mentioned previously, Beilinson \cite{Beilinson1978} gave a locally-free resolution of the structure sheaf of the diagonal of length $n$ for $\Delta \subset \PP^n \times \PP^n$. Considering just the line bundles which appear on the left-hand side yields the full strong exceptional collection \cite{huybrechts2006fourier} \[ \mathcal{E} = \{ \mathcal{O}_{\PP^n}(-n), \mathcal{O}_{\PP^n}(-n+1), \dots, \mathcal{O}_{\PP^n} \}.\] 

King conjectured in unpublished notes that any smooth projective toric variety admits a full strong exceptional collection of line bundles. Counterexamples to King's conjecture were given in \cite{hille2006counterexamplekingsconjecture} and \cite{Efimov2014}, so that whether a smooth projective toric variety admits a full strong exceptional collection of line bundles is now checked on a case-by-case basis. The author investigates implications of resolutions of the diagonal to full strong exceptional collections of line bundles in \cite{anderson2024exceptionalcollectionslinebundles} and \cite{ramirez2025exceptionalcollectionstoricfano} for smooth projective toric varieties in dimensions 2-4. Exceptional collections of line bundles for smooth projective toric Fano varieties in dimension 4 were also constructed in \cite{prabhunaik2015tiltingbundlestoricfano}. When a full strong exceptional collection of line bundles on a smooth projective toric variety exists, it must have length equal to the rank of the K-theory of $X_\Sigma$, which is also the topological euler characteristic of $X_\Sigma$. Below we discuss implications of our resolution of the diagonal for full strong exceptional collections of line bundles for the twice-iterated blow-up of $\PP^2$ at a point.

\subsection{Exceptional collections for twice-iterated blow-up of $\PP^2$ at a point}

Considering just the line bundles which appear on the left-hand side gives the collection \[ \mathcal{E} = \{\mathcal{O}(-1,-2,-1), \mathcal{O}(-1,-1,0), \mathcal{O}(0,-1,0), \mathcal{O}, \mathcal{O}(1,1,1) \} \] 

with

\begin{align*}
    \mathrm{Ext}^\bullet_{\mathcal{O}_{X_\Sigma}}(\mathcal{O}(D), \mathcal{O}(E)) \cong H^*(X_\Sigma, \mathcal{O}(E-D)) = H^*(X_\Sigma, \mathcal{O}_{X_\Sigma})
\end{align*}

for $D=E$ gives $$\mathrm{Ext}^\bullet_{\mathcal{O}_{X_\Sigma}}(\mathcal{O}(D), \mathcal{O}(D)) = \begin{cases} \Bbbk & i=0 \\ 0 & \text{ else }\end{cases} $$ for all $\mathcal{O}(D)$ appearing in $\mathcal{E}$. Direct computation yields that $\mathrm{Hom}(E_i, E_j[d]) = 0$ for all $i>j, d\in \Z$, so $\mathcal{E}$ gives an exceptional collection of line bundles on $X_\Sigma$ \cite[Definition 1.57]{huybrechts2006fourier}. We also note that the rank of the K-theory of $X_\Sigma$ is $5$, which matches the number of objects appearing in $\mathcal{E}$. $\mathrm{Ext}^\bullet_{\mathcal{O}_{X_\Sigma}}(E_j, E_i)$ for $i>j$ (i.e., Hom's `to the right') are given in Figure~\ref{fig:exchoms}. We calculate $\mathrm{Hom}^{\bullet}(E_j, E_i[d])$ for $i>j, d\in \Z$ by, respectively, computing $H^*(X_\Sigma, \mathcal{O}_{X_\Sigma}(a_1,a_2,a_3))$ for $(a_1, a_2, a_3)$ in the following set $\mathcal{S}$:
\begin{align*}
 \mathcal{S} &=  \{(0,-1,-1),(-1,-1,-1),(-1,-2,-1),(-2,-3,-2),(-1,0,0),(-1,-1,0),(-2,-2,-1),(0,-1,0)\} 
 \end{align*}
One can show that this is given by the reduced cohomology of $V_{D,m}$ using the methods of \cite[Section 9.1 and Proposition 9.1.6]{C-L-S}.

\FloatBarrier

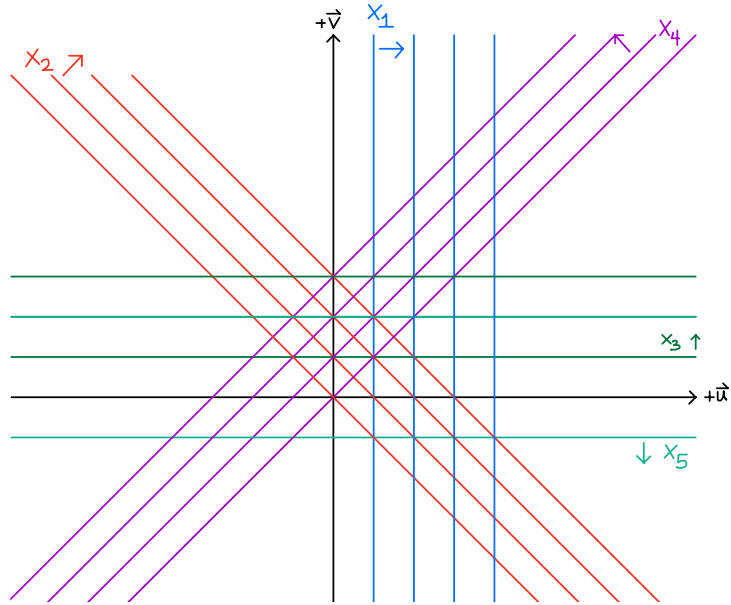
\begin{figure}[h]
\definecolor{bpBlue}{RGB}{0,111,255}
\definecolor{bpRed}{RGB}{237,54,36}
\definecolor{bpGreen}{RGB}{0,115,59}
\definecolor{bpPurple}{RGB}{164,0,199}
\definecolor{bpTeal}{RGB}{25,176,146}

\centering
\begin{tikzpicture}[
  x=.88cm,
  y=.88cm,
  line cap=round,
  line join=round,
  >=stealth,
  every node/.style={font=\small},
  hyperplane/.style={
    line width=.65pt
  },
  increase/.style={
    ->,
    line width=.8pt,
    shorten <=-1pt
  },
  hplabel/.style={
    fill=white,
    fill opacity=.88,
    text opacity=1,
    inner sep=1.2pt
  }
]

  \def\xmin{-4.15}
  \def\xmax{5.45}
  \def\ymin{-4.15}
  \def\ymax{4.35}

  \begin{scope}
    \clip (\xmin,\ymin) rectangle (\xmax,\ymax);

    \foreach \k in {1,2,3,4}{
      \draw[
        hyperplane,
        bpBlue
      ]
        (\k,\ymin) -- (\k,\ymax);
    }

    \foreach \k in {0,1,2,3}{
      \draw[
        hyperplane,
        bpRed
      ]
        (\xmin,{\k-\xmin})
        --
        (\xmax,{\k-\xmax});
    }

    \foreach \k in {0,1,2,3}{
      \draw[
        hyperplane,
        bpPurple
      ]
        (\xmin,{\xmin+\k})
        --
        (\xmax,{\xmax+\k});
    }

    \draw[
      hyperplane,
      bpGreen
    ]
      (\xmin,3) -- (\xmax,3);

    \draw[
      hyperplane,
      bpTeal
    ]
      (\xmin,2) -- (\xmax,2);

    \draw[
      hyperplane,
      bpGreen
    ]
      (\xmin,1) -- (\xmax,1);

    \draw[
      hyperplane,
      bpTeal
    ]
      (\xmin,-1) -- (\xmax,-1);
  \end{scope}

  \draw[
    ->,
    black,
    line width=.72pt
  ]
    (\xmin,0) -- (\xmax+.18,0)
    node[
      anchor=west
    ]
      {$+\vec u$};

  \draw[
    ->,
    black,
    line width=.72pt
  ]
    (0,\ymin) -- (0,\ymax+.18)
    node[
      anchor=south
    ]
      {$+\vec v$};


  \node[
    hplabel,
    text=bpBlue,
    anchor=south
  ] at (1,4.16)
    {$x_1$};

  \draw[
    bpBlue,
    increase
  ]
    (1,3.72) -- (1.58,3.72);

  \node[
    text=bpRed,
    anchor=south east
  ] at (-3.40,3.98)
    {$x_2$};

  \draw[
    bpRed,
    increase
  ]
    (-3.12,3.12) -- (-2.70,3.54);

  \node[
    text=bpPurple,
    anchor=south west
  ] at (3.24,4.04)
    {$x_4$};

  \draw[
    bpPurple,
    increase
  ]
    (2.92,3.92) -- (2.50,4.34);

  \node[
    hplabel,
    text=bpGreen,
    anchor=east
  ] at (5.12,1)
    {$x_3$};

  \draw[
    bpGreen,
    increase
  ]
    (5.20,1) -- (5.20,1.48);

  \node[
    hplabel,
    text=bpTeal,
    anchor=west
  ] at (4.98,-1)
    {$x_5$};

  \draw[
    bpTeal,
    increase
  ]
    (4.75,-1) -- (4.75,-1.48);

\end{tikzpicture}
\caption{$\mathcal{H}_L \cap \R L$ for $X_\Sigma$}
\label{fig: bp2HL}
\end{figure}

\FloatBarrier
\begin{figure}[h]
\definecolor{bpfdTop}{RGB}{0,174,145}
\definecolor{bpfdBottom}{RGB}{0,128,75}
\definecolor{bpfdBlue}{RGB}{0,111,255}
\definecolor{bpfdRed}{RGB}{244,55,43}
\definecolor{bpfdPurple}{RGB}{175,0,210}

\centering
\begin{tikzpicture}[
  x=1.02cm,
  y=1.02cm,
  line cap=round,
  line join=round,
  >=stealth,
  every node/.style={font=\small}
]

  \coordinate (NW) at (0,6);
  \coordinate (NE) at (6,6);
  \coordinate (SW) at (0,0);
  \coordinate (SE) at (6,0);
  \coordinate (C)  at (3,3);


  \draw[
    bpfdTop,
    line width=.75pt
  ]
    (NW) -- (NE);

  \draw[
    bpfdBottom,
    line width=.75pt
  ]
    (SW) -- (SE);

  \draw[
    bpfdBlue,
    line width=.75pt
  ]
    (NW) -- (SW);

  \draw[
    bpfdBlue,
    line width=.75pt
  ]
    (NE) -- (SE);

  \draw[
    bpfdRed,
    line width=.75pt
  ]
    (NW) -- (C) -- (SE);

  \draw[
    bpfdPurple,
    line width=.75pt
  ]
    (SW) -- (C) -- (NE);

  \draw[
    bpfdTop,
    line width=.7pt
  ]
    (2.82,6.18) -- (3.00,6.00) -- (2.82,5.82);

  \draw[
    bpfdBottom,
    line width=.7pt
  ]
    (2.82,.18) -- (3.00,0) -- (2.82,-.18);

  \foreach \d in {-.30,-.10,.10,.30}{
    \draw[
      bpfdBlue,
      line width=.62pt
    ]
      (-.18,{3.18+\d})
      --
      (0,{3+\d})
      --
      (.18,{3.18+\d});

    \draw[
      bpfdBlue,
      line width=.62pt
    ]
      (5.82,{3.18+\d})
      --
      (6,{3+\d})
      --
      (6.18,{3.18+\d});
  }

  \foreach \d in {-.30,-.18,-.06,.06,.18,.30}{
    \draw[
      bpfdRed,
      ->,
      line width=.58pt
    ]
      ({1.68+\d},{4.32-\d}) -- ({1.36+\d},{4.64-\d});
  }

  \foreach \d in {-.10,.10}{
    \draw[
      bpfdRed,
      ->,
      line width=.58pt
    ]
      ({4.68+\d},{1.32-\d}) -- ({4.36+\d},{1.64-\d});
  }

  \foreach \d in {-.14,0,.14}{
    \draw[
      bpfdPurple,
      ->,
      line width=.58pt
    ]
      ({1.68+\d},{1.68+\d}) -- ({1.36+\d},{1.36+\d});
  }

  \foreach \d in {-.24,-.12,0,.12,.24}{
    \draw[
      bpfdPurple,
      ->,
      line width=.58pt
    ]
      ({4.32+\d},{4.32+\d}) -- ({4.64+\d},{4.64+\d});
  }

  \foreach \P in {NW,NE,SW,SE,C}{
    \fill[black] (\P) circle (4.2pt);
  }

  \node[
    anchor=south east
  ] at (-.12,6.20)
    {$v_1$};

  \node[
    anchor=south west
  ] at (6.12,6.20)
    {$v_1$};

  \node[
    anchor=north east
  ] at (-.10,-.16)
    {$v_1$};

  \node[
    anchor=north west
  ] at (6.10,-.16)
    {$v_1$};

  \node[
    anchor=south west
  ] at (3.28,3.05)
    {$v_2$};

  \node at (3.00,6.65)
    {$E_1$};

  \node at (2.55,.62)
    {$E_1$};

  \node[
    anchor=east
  ] at (-.34,3.28)
    {$E_4$};

  \node[
    anchor=west
  ] at (6.34,3.28)
    {$E_4$};

  \node at (1.25,4.40)
    {$E_6$};

  \node at (4.88,4.40)
    {$E_5$};

  \node at (2.05,1.74)
    {$E_3$};

  \node at (4.02,1.62)
    {$E_2$};

  \node at (3.10,4.88)
    {$F_3$};

  \node at (1.72,3.02)
    {$F_4$};

  \node at (4.55,3.02)
    {$F_2$};

  \node at (3.00,1.45)
    {$F_1$};

\end{tikzpicture}

\caption{Fundamental domain for $\faktor{(\mathcal{H}_L \cap \R L)}{L}$. Note the extra vertex in the center.}
\label{fig: bp2funddom}
\end{figure}
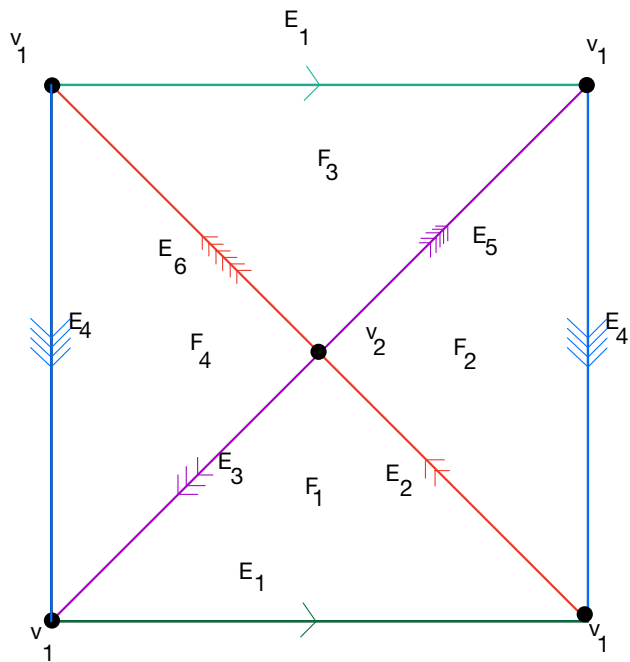 

\FloatBarrier

\FloatBarrier

\begin{table}
\caption{Monomial labelings on faces of $\faktor{(\mathcal{H}_L \cap \R L)}{L}$ }
\[\begin{array}{c | c }
\text{Face} & \text{Monomial labeling}\\
\hline
v_1 & 1, \frac{x_1x_2 y_4}{x_4 y_1 y_2}, \frac{x_1x_2^2 x_3 y_5}{x_5 y_1 y_2^2 y_3}, \frac{x_2 x_3 x_4 y_5}{x_5 y_2 y_3 y_4 } \text{ starting from the origin, in counter-clockwise order}\\
\hline
v_2 & \frac{x_2y_5}{x_5y_2} \\
\hline
E_1 & x_1 x_2 y_4 , \frac{x_1 x_2^2 x_3 x_4 y_5 }{x_5 y_2 y_3 } \text{ from bottom to top}\\
\hline
E_2 & \frac{x_1x_2 y_4 y_5}{y_2} \\
\hline
E_3 & x_2y_5 \\
\hline
E_4 & \frac{x_1 x_2^2 x_3 y_4 y_5 }{y_1 y_2}, x_2 x_3 x_4 y_5 \text{ from right to left}\\ \hline
E_5 & \frac{ x_1 x_2^2 x_3 y_5}{x_5 y_2}\\ \hline
E_6 & \frac{x_2 x_3 x_4 y_5 }{x_5 y_2 } \\ \hline
F_1 & x_1 x_2 y_4 y_5 \\ \hline
F_2 & \frac{x_1 x_2^2 x_3 y_4 y_5}{y_2}\\ \hline 
F_3 & \frac{x_1 x_2^2 x_3 x_4 y_5}{x_5 y_2} \\ \hline
F_4 & x_2 x_3 x_4 y_5 \\ \hline 
\end{array} \] 
\label{table: monbp2}
\end{table} 

\FloatBarrier
\begin{figure}[h]
\centering
\begin{tikzpicture}[
  >=stealth,
  line cap=round,
  line join=round,
  object/.style={
    font=\small,
    fill=white,
    inner sep=2.5pt
  },
  morphism/.style={
    ->,
    line width=.62pt
  },
  over/.style={
    preaction={
      draw=white,
      line width=3.2pt,
      -
    }
  },
  degree/.style={
    font=\scriptsize,
    fill=white,
    fill opacity=.96,
    text opacity=1,
    inner sep=1.4pt,
    rounded corners=1pt
  }
]

  \node[object] (E4) at (0,7.2)
    {$\Bbbk\acts E_4=\mathcal{O}(1,1,1)$};

  \node[object] (E3) at (0,5.4)
    {$\Bbbk\acts E_3= \mathcal{O}$};

  \node[object] (E2) at (0,3.6)
    {$\Bbbk\acts E_2= \mathcal{O}(0,-1,0)$};

  \node[object] (E1) at (0,1.8)
    {$\Bbbk\acts E_1= \mathcal{O}(-1,-1,0)$};

  \node[object] (E0) at (0,0)
    {$\Bbbk\acts E_0=\mathcal{O}(-1,-2,-1)$};

  \draw[morphism]
    (E0.north)
    --
    node[
      degree,
      anchor=west,
      xshift=2pt
    ] {$(0,1,1)$}
    (E1.south);

  \draw[morphism]
    (E1.north)
    --
    node[
      degree,
      anchor=west,
      xshift=2pt
    ] {$(1,0,0)$}
    (E2.south);

  \draw[morphism]
    (E2.north)
    --
    node[
      degree,
      anchor=west,
      xshift=2pt
    ] {$(0,1,0)$}
    (E3.south);

  \draw[morphism]
    (E3.north)
    --
    node[
      degree,
      anchor=west,
      xshift=2pt
    ] {$(1,1,1)$}
    (E4.south);

  \draw[morphism]
    (E0.west)
    to[
      out=180,
      in=180,
      looseness=1.20
    ]
    node[
      degree,
      anchor=east,
      xshift=-2pt,
      pos=.50
    ] {$(1,1,1)$}
    (E2.west);

  \draw[morphism,over]
    (E1.west)
    to[
      out=180,
      in=180,
      looseness=1.20
    ]
    node[
      degree,
      anchor=east,
      xshift=-2pt,
      pos=.50
    ] {$(1,1,0)$}
    (E3.west);

  \draw[morphism]
    (E0.west)
    to[
      out=180,
      in=180,
      looseness=1.58
    ]
    node[
      degree,
      anchor=east,
      xshift=-2pt,
      pos=.52
    ] {$(2,3,2)$}
    (E4.west);

  \draw[morphism]
    (E2.east)
    to[
      out=0,
      in=0,
      looseness=1.20
    ]
    node[
      degree,
      anchor=west,
      xshift=2pt,
      pos=.50
    ] {$(1,2,1)$}
    (E4.east);

  \draw[morphism,over]
    (E0.east)
    to[
      out=0,
      in=0,
      looseness=1.32
    ]
    node[
      degree,
      anchor=west,
      xshift=2pt,
      pos=.48
    ] {$(1,2,1)$}
    (E3.east);

  \draw[morphism]
    (E1.east)
    to[
      out=0,
      in=0,
      looseness=1.48
    ]
    node[
      degree,
      anchor=west,
      xshift=2pt,
      pos=.57
    ] {$(2,2,1)$}
    (E4.east);

\end{tikzpicture}
\caption{For $i>j$, an arrow $E_j\to E_i$ labeled
$(a_1,a_2,a_3)$ records
$\operatorname{Ext}^{\bullet}(E_j,E_i)
\cong
H^{\bullet}\!\left(
X_\Sigma,
\mathcal{O}(a_1,a_2,a_3)
\right)$.}
\label{fig:exchoms}
\end{figure}

\sectionnotoc{Acknowledgements} Gabriel Kerr advised this project and first encouraged my interest in toric varieties. I am grateful to Rina Anno and Zongzhu Lin for conversations about algebraic geometry, to Daniel Erman for helpful discussions, and to Christine Berkesch for extensive comments on the manuscript. I also thank an anonymous referee for suggestions that improved the exposition. The 2019 MSRI Summer School on Toric Varieties, organized by David Cox and Hal Schenck, was especially valuable for this work.

\newpage
\sectionnotoc{References}
\printbibliography[heading=none]

@misc{hille2006counterexamplekingsconjecture,
      title={A Counterexample to King's Conjecture}, 
      author={Lutz Hille and Markus Perling},
      year={2006},
      eprint={math/0602258},
      archivePrefix={arXiv},
      primaryClass={math.AG},
      url={https://arxiv.org/abs/math/0602258}, 
}

@misc{anderson2024exceptionalcollectionslinebundles,
      title={Exceptional Collections of Line Bundles for Smooth Toric Fano Surfaces and Threefolds}, 
      author={Reginald Anderson},
      year={2024},
      eprint={2403.09663},
      archivePrefix={arXiv},
      primaryClass={math.AG},
      url={https://arxiv.org/abs/2403.09663}, 
}

@misc{ramirez2025exceptionalcollectionstoricfano,
      title={Exceptional Collections for Toric Fano Fourfolds}, 
      author={Jumari Querimit Ramirez and Hill Zhang and Justin Son and Reginald Anderson},
      year={2025},
      eprint={2410.23290},
      archivePrefix={arXiv},
      primaryClass={math.AG},
      url={https://arxiv.org/abs/2410.23290}, 
}

@misc{prabhunaik2015tiltingbundlestoricfano,
      title={Tilting Bundles on Toric Fano Fourfolds}, 
      author={Nathan Prabhu-Naik},
      year={2015},
      eprint={1501.05871},
      archivePrefix={arXiv},
      primaryClass={math.AG},
      url={https://arxiv.org/abs/1501.05871}, 
}

@article{BondalRepnAssocAlg,
doi = {10.1070/IM1990v034n01ABEH000583},
url = {https://dx.doi.org/10.1070/IM1990v034n01ABEH000583},
year = {1990},
month = {feb},
publisher = {},
volume = {34},
number = {1},
pages = {23},
author = {A I Bondal},
title = {REPRESENTATION OF ASSOCIATIVE ALGEBRAS
AND COHERENT SHEAVES},
journal = {Mathematics of the USSR-Izvestiya},
abstract = {It is proved that a triangulated category generated by a strong exceptional collection is equivalent to the derived category of modules over an algebra of homomorphisms of this collection. For the category of coherent sheaves on a Fano variety, the functor of tightening to a canonical class is described by means of mutations of an exceptional collection generating the category. The connection between mutability of strong exceptional collections and the Koszul property is studied. It is proved that in the geometric situation mutations of exceptional sheaves consist of present sheaves and the corresponding algebra of homomorphisms is Koszul and selfconsistent. 

Bibliography: 18 titles.}
}

@misc{hanlon2024resolutionstoricsubvarietiesline,
      title={Resolutions of toric subvarieties by line bundles and applications}, 
      author={Andrew Hanlon and Jeff Hicks and Oleg Lazarev},
      year={2024},
      eprint={2303.03763},
      archivePrefix={arXiv},
      primaryClass={math.AG},
      doi={https://doi.org/10.1017/fmp.2024.21},
      url={https://arxiv.org/abs/2303.03763}, 
}

@misc{favero2022homotopypathalgebras,
      title={Homotopy Path Algebras}, 
      author={David Favero and Jesse Huang},
      year={2022},
      eprint={2205.03730},
      archivePrefix={arXiv},
      primaryClass={math.AG},
      url={https://arxiv.org/abs/2205.03730}, 
}

@article{anderson2023resolutiondiagonaltoricdelignemumford,
  author       = {Anderson, Reginald},
  title        = {A Resolution of the Diagonal for Toric {Deligne--Mumford} Stacks},
  journaltitle = {J. Commut. Algebra},
  volume       = {18},
  number       = {1},
  date         = {2026},
  doi          = {10.1216/jca.2026.18.11}
}

@article{Brown_2024,
   title={A short proof of the Hanlon-Hicks-Lazarev Theorem},
   volume={12},
   ISSN={2050-5094},
   url={http://dx.doi.org/10.1017/fms.2024.40},
   DOI={10.1017/fms.2024.40},
   journal={Forum of Mathematics, Sigma},
   publisher={Cambridge University Press (CUP)},
   author={Brown, Michael K. and Erman, Daniel},
   year={2024} }

@article{Efimov2014,
	doi = {10.1112/jlms/jdu037},
	url = {https://doi.org/10.1112%2Fjlms%2Fjdu037},
	year = 2014,
	month = {jul},
	publisher = {Wiley},
	volume = {90},
	number = {2},
	pages = {350--372},
	author = {Alexander I. Efimov},
	title = {Maximal lengths of exceptional collections of line bundles},
	journal = {Journal of the London Mathematical Society}
}

@article{coxhgscoordinatering,
  doi = {10.48550/ARXIV.ALG-GEOM/9210008},
  url = {https://arxiv.org/abs/alg-geom/9210008},
  author = {Cox, David A.},
  keywords = {Algebraic Geometry (math.AG), FOS: Mathematics, FOS: Mathematics, 14M25},
  title = {Erratum to "The Homogeneous Coordinate Ring of a Toric Variety", along with the original paper},
  publisher = {arXiv},
  year = {1992},
  copyright = {arXiv.org perpetual, non-exclusive license}
}

@book{fulton,
 ISBN = {9780691000497},
 URL = {http://www.jstor.org/stable/j.ctt1b7x7vc},
 abstract = {Toric varieties are algebraic varieties arising from elementary geometric and combinatorial objects such as convex polytopes in Euclidean space with vertices on lattice points. Since many algebraic geometry notions such as singularities, birational maps, cycles, homology, intersection theory, and Riemann-Roch translate into simple facts about polytopes, toric varieties provide a marvelous source of examples in algebraic geometry. In the other direction, general facts from algebraic geometry have implications for such polytopes, such as to the problem of the number of lattice points they contain. In spite of the fact that toric varieties are very special in the spectrum of all algebraic varieties, they provide a remarkably useful testing ground for general theories.The aim of this mini-course is to develop the foundations of the study of toric varieties, with examples, and describe some of these relations and applications. The text concludes with Stanley's theorem characterizing the numbers of simplicies in each dimension in a convex simplicial polytope. Although some general theorems are quoted without proof, the concrete interpretations via simplicial geometry should make the text accessible to beginners in algebraic geometry.},
 author = {William Fulton},
 publisher = {Princeton University Press},
 title = {Introduction to Toric Varieties. (AM-131)},
 urldate = {2022-08-09},
 year = {1993}
}

@book{C-L-S,
title = "Toric varieties",
author = "Cox, {David A.} and Little, {John B.} and Schenck, {Henry K.}",
year = "2011",
doi = "10.1090/gsm/124",
language = "English (US)",
isbn = "978-0-8218-4819-7",
volume = "124",
series = "Graduate Studies in Mathematics",
publisher = "American Mathematical Society",
address = "United States",
}

@misc{bayer-popescu-sturmfels,
  doi = {10.48550/ARXIV.MATH/9912247},
  
  url = {https://arxiv.org/abs/math/9912247},
  
  author = {Bayer, Dave and Popescu, Sorin and Sturmfels, Bernd},
  
  keywords = {Algebraic Geometry (math.AG), Combinatorics (math.CO), FOS: Mathematics, FOS: Mathematics},
  
  title = {Syzygies of Unimodular Lawrence Ideals},
  
  publisher = {arXiv},
  
  year = {1999},
  
  copyright = {Assumed arXiv.org perpetual, non-exclusive license to distribute this article for submissions made before January 2004}
}

@article{Beilinson1978,

author = {Beilinson, A. A.},
title = {Coherent Sheaves on Pn and Problems of Linear Algebra},
volume = {12},
journal = {Functional Analysis and Its Applications},
number = {3},
pages = {214 -- 216},
year = {1978},
doi = {10.1007/BF01681436},
url = {https://doi.org/10.1007/BF01681436}

}

@BOOK{miller-sturmfels,
    AUTHOR = "E.~Miller and B.~Sturmfels",
    TITLE = "Combinatorial Commutative Algebra",
    PUBLISHER = "Springer New York, NY",
    VOLUME = "1st Ed.",
    YEAR = 2005 }

@misc{bayer-sturmfels,
  doi = {10.48550/ARXIV.ALG-GEOM/9711023},
  url = {https://arxiv.org/abs/alg-geom/9711023},
  author = {Bayer, Dave and Sturmfels, Bernd},
  keywords = {Algebraic Geometry (math.AG), FOS: Mathematics, FOS: Mathematics},
  title = {Cellular Resolutions of Monomial Modules},
  publisher = {arXiv},
  year = {1997},
  copyright = {Assumed arXiv.org perpetual, non-exclusive license to distribute this article for submissions made before January 2004}
}

@book{huybrechts2006fourier,
  title={Fourier-Mukai Transforms in Algebraic Geometry},
  author={Huybrechts, D.},
  isbn={9780199296866},
  lccn={2006298244},
  series={Oxford Mathematical Monographs},
  url={https://books.google.com/books?id=9HQTDAAAQBAJ},
  year={2006},
  publisher={Clarendon Press}
}

\end{document}